\pgfplotsset{compat=1.18}
\DeclarePairedDelimiterX{\Iintv}[1]{\llbracket}{\rrbracket}{\iintvargs{#1}}
\let\norm\relax
\DeclarePairedDelimiter\norm\lVert\rVert
\newcommand*\dt{{\Delta\kern-.8ptt}}
\DeclareMathOperator*{\argmin}{arg\,min}
\DeclareMathOperator*{\argmax}{arg\,max}
\newcolumntype{d}[1]{D{.}{.}{#1}} % "decimal" column type
\newcommand{\bR}{\mathbb{R}}
\newcommand{\bN}{\mathbb{N}}
\newcommand{\dd}{\mathrm{d}}
\DeclareMathOperator{\prox}{\mathbf{prox}}
\crefname{section}{section}{sections}
\crefname{subsection}{subsection}{subsections}
\Crefname{section}{Section}{Sections}
\Crefname{subsection}{Subsection}{Subsections}
\Crefname{figure}{Figure}{Figures}
\newlist{lemenum}{enumerate}{1} % also creates a counter called 'propenumi'
\setlist[lemenum]{label=(\roman*), ref=\theproposition(\roman*), font=\rm}
\newlist{propenum}{enumerate}{1} % also creates a counter called 'propenumi'
\setlist[propenum]{label=(\roman*), ref=\theproposition(\roman*), font=\rm}
\newtheorem{theorem}{Theorem}
\newtheorem{corollary}[theorem]{Corollary}
\newtheorem{definition}[theorem]{Definition}
\newtheorem{lemma}[theorem]{Lemma}
\newtheorem{remark}[theorem]{Remark}
\newlist{assumenum}{enumerate}{1} % also creates a counter called 'propenumi'
\setlist[assumenum]{leftmargin=2.1cm,label=(A\arabic*),font=\bfseries}
\crefname{assumenumi}{assumption}{assumptions}
\Crefname{assumenumi}{Assumption}{Assumptions}
\title{A localized consensus-based sampling algorithm}
\author{Arne Bouillon\thanks{KU Leuven,
    Department of Computer Science (NUMA),
    Celestijnenlaan 200A, 3001 Leuven, Belgium, \href{mailto:arne.bouillon@kuleuven.be}{arne.bouillon@kuleuven.be}
    }\and Alexander Bodard\thanks{KU Leuven,
    Department of Electrical Engineering (ESAT-STADIUS),
    Kasteelpark Arenberg 10, 3001 Leuven, Belgium
}\and Panagiotis Patrinos\footnotemark[2]\and Dirk~Nuyens\footnotemark[1]~\and Giovanni Samaey\footnotemark[1]
}
\begin{document}
\maketitle

\begin{abstract}
    We propose a localized consensus-based method for sampling from non-Gaussian distributions, a task that frequently arises when solving Bayesian inverse problems. Our method arises from an alternative derivation of consensus-based sampling (CBS). Starting from ensemble-preconditioned Langevin dynamics, we replace the potential by its Moreau envelope---a smoother approximation---in order to replace the gradient in the Langevin equation with a proximal operator. We then approximate this operator by a weighted mean. In the limit of infinitely smoothing the potential to a quadratic function, this procedure recovers the standard CBS dynamics. In addition, outside this limit, we retrieve a refined variant of polarized CBS. We call the resulting algorithm \emph{localized consensus-based sampling}, since particles interact more with nearby particles than with faraway ones. Our method is affine-invariant, exact for Gaussian targets in the mean-field limit, and demonstrates improved robustness over polarized CBS in numerical experiments. Like other consensus-based methods, localized CBS is gradient-free and easily parallelizable.
\end{abstract}

\section{Introduction}
    We consider the problem of sampling a $d$-dimensional probability distribution with density\hspace{-.1cm}
    \begin{equation} \label{eq:intro:intro:dist}
        \pi(u) = \frac{\exp(-V(u))}{\int\exp(-V(v))\,\mathrm dv} \eqqcolon \frac{\widehat\pi(u)}Z,
    \end{equation}
    where $V\colon\bR^{d}\rightarrow\bR$ is a potential function with $\int\exp(-V(v))\mathrm dv<\infty$. Here, we implicitly take integrals over~$\bR^{d}$. In many applications $V$ is known but the integral~$Z$ may be very expensive to compute, such that $\pi$ is available only up to a \mbox{multiplicative constant}.

    \subsection{Bayesian inverse problems} \label{sec:intro:bip}
    \Cref{eq:intro:intro:dist} commonly arises in Bayesian inversion (see, e.g.,~\cite{kaipioStatisticalComputationalInverse2005,stuartInverseProblemsBayesian2010}). Consider a model $G$ with unknown parameters $u$ and perturbed observation\hspace{-.1cm}
    \begin{equation} \label{eq:intro:bip:bip}
        y = G(u) + \eta,
    \end{equation}
    where $\eta$ is some unknown measurement noise. The goal is characterizing likely values of~$u$, based on $y$ and $G$. Denoting by $\pi_\mathrm{prior}(u)$ and $\pi_\mathrm{noise}(\eta)$ our prior beliefs about $u$ and $\eta$, Bayes' rule describes our belief about $u$ after the observation by the \emph{posterior} distribution~with~density\hspace{-.1cm}
    \begin{equation} \label{eq:intro:bip:post}
        \pi_\mathrm{post}(u \mid y) = \frac{\pi_\mathrm{noise}(y - G(u))\,\pi_\mathrm{prior}(u)}{\int{\pi_\mathrm{noise}(y - G(v))\,\pi_\mathrm{prior}(v)}\,\mathrm dv} \eqqcolon \frac{\widehat\pi_\mathrm{post}(u\mid y)}{\pi_\mathrm{ev}(y)}.
    \end{equation}

    One approach to solve the Bayesian inverse problem~\cref{eq:intro:bip:bip} is to find the \emph{maximum a posteriori} (MAP) parameter $u_\mathrm{MAP}$ that maximizes~\cref{eq:intro:bip:post} as discussed in, e.g.,~\cite{dashtiMAPEstimatorsTheir2013}:
    \begin{equation}
        u_\mathrm{MAP} \coloneqq \argmax\nolimits_{u}\widehat\pi_\mathrm{post}(u\mid y).
    \end{equation}
    Optimization algorithms can solve the inverse problem in this fashion. However, $u_\mathrm{MAP}$ gives no information about the uncertainty on $u$ captured by $\pi_\mathrm{post}$. When uncertainty quantification is required, other characteristics of the posterior must be inferred. In that case, one commonly wants to integrate some function with respect to~$\pi_\mathrm{post}$, e.g., to compute statistical moments of the distribution. Numerical integration techniques, such as~\cite{herrmannQuasiMonteCarloBayesian2021}, which uses quasi-Monte Carlo, may provide a sufficient approximation even in high dimensions. Alternatively, it is common to use algorithms that produce a sequence of Monte Carlo samples distributed approximately according to~$\pi_\mathrm{post}$. Then solving~\cref{eq:intro:bip:bip} corresponds to drawing samples from~\cref{eq:intro:intro:dist}, with the \emph{evidence} $\pi_\mathrm{ev}(y)$ as the intractable $Z$ and with $\widehat\pi_\mathrm{post}(\cdot\mid y)$ as the unnormalized~\mbox{density}~$\widehat\pi$.

    \subsection{Related literature}
    Sampling from distributions with density~\cref{eq:intro:intro:dist} with unknown~$Z$ is historically done with Markov chain Monte Carlo (MCMC) algorithms, which originate in~\cite{hastingsMonteCarloSampling1970,metropolisEquationStateCalculations1953} and are reviewed in detail in~\cite{brooksHandbookMarkovChain2011}. MCMC remains a very active research topic with recent developments such as the popular pCN~\cite{cotterMCMCMethodsFunctions2013} and DILI~\cite{cuiDimensionindependentLikelihoodinformedMCMC2016} proposal moves that are effective in high dimensions and multilevel MCMC versions~\cite{dodwellHierarchicalMultilevelMarkov2015,lykkegaardMultilevelDelayedAcceptance2023} to exploit~cheap~\mbox{approximations}~to~$G$.

    An alternative method is sequential Monte Carlo (SMC)~\cite{delmoralSequentialMonteCarlo2006}, which evolves samples from an easy-to-sample distribution (e.g., $\pi_\mathrm{prior}$ in~\cref{eq:intro:bip:post}) into samples from $\pi$ gradually over multiple steps. Many other methods are based on evolving SDEs that have $\pi$ (or an approximation to $\pi$) as an invariant distribution. An example is Langevin diffusion (see \cref{sec:ips:lang}); after time discretization, the unadjusted Langevin algorithm (ULA)~\cite{robertsExponentialConvergenceLangevin1996} arises as an approximate sampler of $\pi$. Using ULA steps as MCMC proposals---or, equivalently, adding Metropolis--Hastings rejection to ULA---results in the Metropolis-adjusted Langevin algorithm~(MALA)~\cite{besagDiscussionPaperGrenander1994}.

    These Langevin methods use gradients of the potential $V$, which can be undesirable~\cite{dunbarEnsembleInferenceMethods2022b,kovachkiEnsembleKalmanInversion2019}. In the case of non-differentiable potentials, gradients can be approximated using the proximal operator~\cite{doi:10.1137/16M1108340,leeStructuredLogconcaveSampling2021,pereyraProximalMarkovChain2016,titsiasAuxiliaryGradientBasedSampling2018} or similar techniques~\cite{doi:10.1137/21M1406349} to form new, gradient-free algorithms. However, unless the potential has certain structure, this introduces the need to solve an embedded optimization problem. Various methods adapt Langevin dynamics to be gradient-free by using an \emph{ensemble} of multiple \emph{particles} (instances of the SDE) that exchange information, which can be harnessed to approximate gradients. These are often formulated in the infinite-particle limit, as McKean--Vlasov SDEs. Examples include ensemble Kalman sampling (EKS)~\cite{garbuno-inigoInteractingLangevinDiffusions2020} and the closely related ALDI method~\cite{garbuno-inigoAffineInvariantInteracting2020b}, derived from interacting Langevin diffusions and inspired by the ensemble Kalman inversion (EKI) optimizer~\mbox{\cite{iglesiasEnsembleKalmanMethods2013,kovachkiEnsembleKalmanInversion2019,schillingsAnalysisEnsembleKalman2017}}. EKI is related to iterating the ensemble Kalman filter (EnKF)~\cite{evensenSequentialDataAssimilation1994}; an EKI-based sampler is analyzed in~\cite{dingEnsembleKalmanInversion2021}. Other ensemble variants of Langevin dynamics include~\cite{dingConstrainedEnsembleLangevin2022} and the multiscale sampler in~\cite{pavliotisDerivativeFreeBayesianInversion2022}. In addition, the consensus-based sampling (CBS) method~\cite{carrilloConsensusbasedSampling2022}, described in \cref{sec:ips:cbs}, was developed independently of Langevin dynamics. It was inspired by consensus-based optimization~(CBO)~\cite{pinnauConsensusbasedModelGlobal2017}.

    Interaction in particle methods can be \emph{localized}: instead of computing a global interaction term, each particle interacts more with particles that are nearby than with those that are far away. The aim of localization is to avoid the linearity assumption on $G$ that is implicit in these methods. Localized algorithms show improved performance for nonlinear $G$, since the adapted dynamics rely on \emph{local} instead of global linearity. The paper~\cite{reichFokkerPlanckParticleSystems2021a} proposed to use localized interaction in the EKS/ALDI algorithm---details are given in \cref{sec:compare:laldi}. Similar ideas were proposed for EKI and the EnKF~\cite{wackerPerspectivesLocallyWeighted2024a}, for an ensemble Kalman method for rare-event sampling~\cite{wagnerEnsembleKalmanFilter2022c}, and for CBS as a variant named \emph{polarized CBS}~\cite{bungertPolarizedConsensusbasedDynamics2024}---see \cref{sec:compare:pcbs}. Our paper develops a different CBS variant with localized interaction, named \emph{localized CBS}. 

    \begin{remark}[Another concept of localization] \label{rem:intro:lit:loc}
        We stress that localization as discussed here is unrelated to the notion of localization in~\cite{houtekamerDataAssimilationUsing1998,ottLocalEnsembleKalman2004} for the EnKF, in~\cite{al-ghattasNonasymptoticAnalysisEnsemble2024,tongLocalizationEnsembleKalman2023} for EKI, and in~\cite{morzfeldLocalizationMCMCSampling2019} for MCMC. There, the empirical covariance matrix is corrected for spurious correlations, caused by a finite ensemble size, between parameter \emph{indices} that should be uncorrelated---e.g., because they correspond to elements at a large distance \emph{in the underlying problem}. This technique can also increase the rank of the covariance and prevent~EKI's~\emph{subspace~property}~\cite{tongLocalizationEnsembleKalman2023}. In contrast, we adopt the term \emph{localized} to denote localized interaction---where particles interact more with nearby particles than with faraway ones---following related literature such as~\cite{reichFokkerPlanckParticleSystems2021a,wagnerEnsembleKalmanFilter2022c}.
    \end{remark}

    \subsection{Objectives and contributions} \label{sec:intro:obj}
    There are several aspects to consider when assessing sampling algorithms, both computationally and in terms of accuracy.
    \begin{enumerate}[label={(\roman*)}]
        \item \emph{The need for gradients.} Methods that require gradients of $V$ cannot be used when gradients are unavailable.
        \item \emph{Parallelizability.} Standard MCMC methods are inherently sequential due to the need to overcome a burn-in phase and are hard to parallelize effectively~\cite{seelingerHighPerformanceUncertainty2021,yangParallelizableMarkovChain2018}. SMC and McKean--Vlasov-based methods use an ensemble of particles in parameter space that exchange information through interaction. This means that any work that is local to a single particle, such as evaluating $G$, can be trivially parallelized over all particles.
        \item \emph{Non-Gaussian performance.} Many sampling methods use approximations that hold only for Gaussian distributions, leading to poor sampling performance when the target distribution is non-Gaussian or multimodal. Non-Gaussian distributions frequently arise in practice, for instance when solving Bayesian inverse problems~\cref{eq:intro:bip:bip} with a nonlinear forward model $G$.
        \item \emph{Affine-invariance.} Some algorithms produce poor samples when the different dimensions of $\pi$ are poorly scaled. The property of affine-invariance (detailed in \cref{def:ips:lang:ai} in the following section) ensures that affine transformations of the parameters do not influence sampling performance.
    \end{enumerate}
    This paper focuses on gradient-free, parallelizable sampling from poorly scaled, non-Gaussian distributions. We make the following contributions: \begin{enumerate}[label={(\roman*)}]
        \item \Cref{sec:cbs-approx} proposes a novel way to connect interacting Langevin diffusions~\cite{garbuno-inigoInteractingLangevinDiffusions2020}, which use gradients, to consensus-based sampling~\cite{carrilloConsensusbasedSampling2022}, which does not. We rely on two approximations: replacing $V$ by a Moreau envelope and approximating the proximal operator by a weighted mean. CBS arises in the limit when the Moreau envelope reduces to a quadratic.
        \item \Cref{sec:lcbs:glob-prec,sec:lcbs:loc-prec} look at these approximations outside the CBS limit and tailor them to non-Gaussian problems. This results in an algorithm we name \emph{localized consensus-based sampling}. We perform an analysis of localized CBS with a broad class of covariance-based preconditioners. We derive ordinary differential equations (ODEs) for the evolution of its mean and covariance when applied to Gaussian problems in \cref{sec:lcbs:gauss}. More generally, \cref{sec:lcbs:ai} shows that localized CBS is affine-invariant.
        \item \Cref{sec:compare} compares the existing CBS, polarized CBS, and localized ALDI algorithms to our localized CBS method. We argue that our algorithm is an improved version of polarized CBS~\cite{bungertPolarizedConsensusbasedDynamics2024}, with our comparative benefits (such as affine-invariance and non-Gaussian performance) emerging from the connection to interacting Langevin diffusions.
        \item \Cref{sec:comput} discusses approximations of the mean-field equations with finite particle systems and other computational considerations. \Cref{sec:num} performs numerical experiments that showcase the method's affine-invariance and non-Gaussian performance.
    \end{enumerate}
    We start our discussion with a brief review of interacting Langevin diffusions and consensus-based sampling in \cref{sec:ips}.

    \subsection{Notation}
    We denote by $e_i$ the $i$th column of an identity matrix and by $\bm 1$ a vector of ones, both of implicit length. In addition, we define the Mahalanobis norm of a vector $v\in\bR^n$ weighted by the inverse of a positive definite matrix $A\in\bR^{n\times n}$ as
    \begin{equation}
        \norm{v}_A \coloneqq \sqrt{v^TA^{-1}\,v}.
    \end{equation}

\section{Interacting-particle samplers} \label{sec:ips}
    We now recall the interacting-Langevin and consensus-based samplers, and how they are used to sample in accordance with~\cref{eq:intro:intro:dist}.

    \subsection{Interacting-Langevin sampling} \label{sec:ips:lang}
    Let $W_t$ be a standard Brownian motion in $\bR^{d}$. The Langevin equation reads
    \begin{equation} \label{eq:ips:lang:lang}
        \dd U_t = \nabla\log\widehat\pi(U_t)\,\dd t + \sqrt2\,\dd W_t
    \end{equation}
    and, like all SDEs in this paper, should be interpreted in the It\^o sense. Precondition\-ing~\cref{eq:ips:lang:lang} with a positive definite matrix $K$ leads to a different SDE, the preconditioned Langevin equation
    \begin{equation} \label{eq:ips:lang:lang-K}
        \dd U_t = K\nabla\log\widehat\pi(U_t)\,\dd t + \sqrt{2K}\,\dd W_t.
    \end{equation}
    Denote by $\rho_t$ the law of $U_t$. Both~\cref{eq:ips:lang:lang} and~\cref{eq:ips:lang:lang-K} have the property that, under certain conditions on $\widehat\pi$, they transform arbitrary laws $\rho_0$ of the initial value $U_0$ into a law ${\rho_\infty \coloneqq \lim_{t\rightarrow\infty}\rho_t}$ that is exactly the desired distribution $\pi$. This leads to a Monte Carlo algorithm for approximately sampling from $\pi$: simulate some number~$J$ of discretized paths $\smash{\{U_t^j\}_{j=1}^J}$ of~\cref{eq:ips:lang:lang} or~\cref{eq:ips:lang:lang-K} until some sufficiently large time $T$ and use $\smash{\{U_T^j\}_{j=1}^J}$ as the approximate samples. If $K$ is chosen well,~\cref{eq:ips:lang:lang-K} can converge faster than~\cref{eq:ips:lang:lang}.

    In~\cite{garbuno-inigoInteractingLangevinDiffusions2020}, the constant matrix $K$ is replaced by a time-varying matrix $\mathcal C(\rho_t)$, where~$\rho_t$ is the law of $U_t$ and $\mathcal C$ denotes the covariance operator. This change results in
    \begin{equation} \label{eq:ips:lang:lang-int}
        \dd U_t = \mathcal C(\rho_t)\nabla\log\widehat\pi(U_t)\,\dd t+\sqrt{2\mathcal C(\rho_t)}\,\dd W_t.
    \end{equation}
    The covariance operator $\mathcal C$ uses the mean $\mu$ in its definition:
    \begin{equation}
        \mu(\rho) \coloneqq \medint\int v\,\mathrm d\rho(v) \qquad \text{and} \qquad \mathcal C(\rho)\coloneqq\medint\int(v - \mu(\rho))\otimes(v - \mu(\rho))\,\mathrm d\rho(v).
    \end{equation}
    \Cref{eq:ips:lang:lang-int} is not a special case of~\cref{eq:ips:lang:lang-K}; rather, it is an extension, and it avoids having to choose a suitable (problem-dependent) $K$. The corresponding Fokker--Planck equation is
    \begin{equation}
        \partial_t\rho_t(u) = \nabla \cdot \bigl(\rho_t(u)\,\mathcal C(\rho_t)\,(-\nabla\log\widehat\pi(u) + \nabla\log\rho(u))\bigr).
    \end{equation}
    It is shown in~\cite{garbuno-inigoInteractingLangevinDiffusions2020} that~\cref{eq:ips:lang:lang-int} evolves to $\rho_\infty=\pi$ at an exponential rate when $V$ is strongly convex and $\mathcal C(\rho_t)$ is bounded from below. In the non-convex case, $\pi$ is still an invariant distribution. In addition,~\cite{garbuno-inigoAffineInvariantInteracting2020b} shows that~\cref{eq:ips:lang:lang-int} has the important property of \emph{affine-invariance}.
    \begin{definition}[Affine-invariance] \label{def:ips:lang:ai}
        Consider an SDE that depends on the unnormalized target distribution $\widehat\pi$ (or, equivalently, on the potential $V$). The SDE is called affine-invariant if it is invariant under affine transformations of the state variables and $\widehat\pi$. That is, define
        \begin{equation}
            \tilde\pi(z) = \widehat\pi(Mz+b) \qquad \text{and} \qquad \tilde\rho_0(z) = \abs M\rho_0(Mz+b)
        \end{equation}
        for an arbitrary invertible matrix $M$ and vector $b$. Denote by $\rho_t$ the probability density at time~$t$ of solutions to the SDE for $\widehat\pi$ and with initial distribution $\rho_0$, and by $\tilde\rho_t$ the equivalent for $\tilde\pi$ and $\tilde\rho_0$. Then, the SDE is affine-invariant if
        \begin{equation}
            \tilde\rho_t(z) = \abs M\rho_t(Mz+b) \qquad \text{for all $t$}.
        \end{equation}
    \end{definition}
    Similar definitions are found in~\cite{garbuno-inigoAffineInvariantInteracting2020b,leimkuhlerEnsemblePreconditioningMarkov2018} for SDEs and, earlier, in~\cite{goodmanEnsembleSamplersAffine2010,greengardEnsemblizedMetropolizedLangevin2015} for discrete-time samplers. The importance of affine-invariance for samplers is widely acknowledged: it guarantees that the sampler will perform as well on poorly scaled distributions as on isotropic distributions. See~\cite{goodmanEnsembleSamplersAffine2010} for an extensive discussion and examples.

    Note that the SDE~\cref{eq:ips:lang:lang-int} is of McKean--Vlasov type\footnote{These McKean--Vlasov equations can be simulated by evolving an ensemble of many time-discretized solution paths in parallel and using their empirical distribution instead of the law~$\rho_t$. This is justified by the principle of \emph{propagation of chaos}~\cite{sznitmanTopicsPropagationChaos1991b}; for the ensemble Kalman sampler, this was studied in detail in~\cite{dingEnsembleKalmanSampler2021b}. Until \cref{sec:comput}, we study non-discretized McKean--Vlasov SDEs.}: its drift and diffusion depend on the law~$\rho_t$ through $\mathcal C(\rho_t)$. It is also possible to use any positive definite preconditioner $\mathbf P(\rho_t; U_t)$ that depends on the state $U_t$ as well as on its law~$\rho_t$. The resulting SDE is
    \begin{equation}
        \dd U_t = \mathbf P(\rho_t; U_t)\nabla\log\widehat\pi(U_t)\,\dd t+\sqrt{2\mathbf P(\rho_t; U_t)}\,\dd W_t
    \end{equation}
    and, similarly to what was noted in~\cite{nuskenNoteInteractingLangevin2019a,reichFokkerPlanckParticleSystems2021a}, has as its Fokker--Planck equation
    \begin{align*}
        \partial_t\rho_t(u) &= \nabla \cdot \left[-\rho_t(u)\,\mathbf P(\rho_t; u)\nabla\log\widehat\pi(u) + \mathbf P(\rho_t; u)\nabla\rho_t(u) + \rho_t(u)\nabla\cdot\mathbf P(\rho_t; u)\right]\\
        &= \nabla \cdot \left[\rho_t(u)\,\mathbf P(\rho_t; u)\,(-\nabla\log\widehat\pi(u) + \nabla\log\rho_t(u))\right] + \nabla \cdot \left[\rho_t(u) \nabla \cdot \mathbf P(\rho_t; u)\right].
    \end{align*}
    Without the last term, $\pi$ would again be invariant under this equation. Hence, one can introduce a correction term into the dynamics to cancel it out; this results in
    \begin{equation} \label{eq:ips:lang:lang-C}
        \dd U_t = \mathbf P(\rho_t; U_t)\nabla\log\widehat\pi(U_t)\,\dd t +\nabla_u\cdot\mathbf P(\rho_t; U_t)\,\dd t+\sqrt{2\mathbf P(\rho_t; U_t)}\,\dd W_t.
    \end{equation}
    Ideas related to~\cref{eq:ips:lang:lang-C} (with an added skew-symmetric matrix in the drift; see also~\cite{maCompleteRecipeStochastic2015}) have been discussed in~\cite{leimkuhlerEnsemblePreconditioningMarkov2018}. Furthermore,~\cref{eq:ips:lang:lang-C} underlies the finite-ensemble~\cite{nuskenNoteInteractingLangevin2019a} and localized~\cite{reichFokkerPlanckParticleSystems2021a} versions of EKS/ALDI through \emph{statistical linearization} (see~\cite{chadaIterativeEnsembleKalman2021}).

    \subsection{Consensus-based sampling} \label{sec:ips:cbs}
    Consensus-based sampling~\cite{carrilloConsensusbasedSampling2022} follows the dynamics\footnote{CBS also has an \emph{optimization mode} where the factor $(\alpha+1)$ is omitted, which results in an optimization method instead of a sampling one. This paper only considers CBS in sampling mode.}
    \begin{equation} \label{eq:cbs-approx:cbs}
        \dd U_t = -(U_t - \mu_{\alpha}(\rho_t))\,\dd t+\sqrt{2(\alpha+1)\,\mathcal C_{\alpha}(\rho_t)}\,\dd W_t.
    \end{equation}
    Here, $\alpha>0$ is a method parameter, $\rho_t$ is the law of $U_t$, and $W_t$ is a standard Brownian motion in $\bR^{d}$. The weighted mean and covariance $\mu_\alpha$ and $\mathcal C_\alpha$ are defined as
    \begin{subequations} \label{eq:cbs-approx:mu-and-C-alpha}
    \begin{align}
        \mu_{\alpha}(\rho)&\coloneqq\frac{\int vw_{\alpha}(v)\,\mathrm d\rho(v)}{\int w_{\alpha}(v)\,\mathrm d\rho(v)},\\
        \mathcal C_{\alpha}(\rho)&\coloneqq\frac{\int(v-\mu_{\alpha}(\rho))\otimes(v-\mu_{\alpha}(\rho))\,w_{\alpha}(v)\,\mathrm d\rho(v)}{\int w_{\alpha}(v)\,\mathrm d\rho(v)},
    \end{align}
    \end{subequations}
    where the weight function is given by
    \begin{equation}
        w_{\alpha}(v) \coloneqq \widehat\pi(v)^\alpha.
    \end{equation}
    The parameter $\alpha$ determines the importance of $\widehat\pi$ in weighting the particles; when $\alpha=0$, we get $(\mu_0, \mathcal C_0) = (\mu, \mathcal C)$. The SDE~\cref{eq:cbs-approx:cbs} is of McKean--Vlasov type, similarly to~\cref{eq:ips:lang:lang-C}. In~\cite{carrilloConsensusbasedSampling2022} it is shown that if~$\rho_0$ and~$\pi$~are Gaussian, then $\lim_{t\rightarrow\infty}\rho_t=\pi$ and the mean and covariance of $\rho_t$ converge exponentially in $t$. Similarly to interacting Langevin diffusions, CBS is affine-invariant. The mean-field limit of~\cref{eq:cbs-approx:cbs} was studied in~\cite{gerberMeanfieldLimitsConsensusBased2024} and~\cite{kossMeanFieldLimit2024}.

\section{Consensus-based sampling as approximate interacting Langevin} \label{sec:cbs-approx}
    We now propose a new connection between interacting-Langevin and consensus-based samplers, arriving at the CBS dynamics~\cref{eq:cbs-approx:cbs} starting from~\cref{eq:ips:lang:lang-C} by making three specific approximations. Throughout, we assume that the potential function $V$ is differentiable and $L$-weakly convex (meaning that $V(\cdot) + \frac L2\norm{\,\cdot\,}_2^2$ is convex for some $L\in\bR_0^+$) and that $\mathcal C_\alpha(\rho_t)$ is positive definite for all $t$.

    \paragraph{First step: Moreau envelope of $V$.} For a symmetric positive definite matrix $A$, denote by
    \begin{equation} \label{eq:cbs-approx:envelope}
        V^{A}(u) \coloneqq \inf_v\bigl(V(v) + \frac{1}{2}\norm{v - u}_A^2\bigr)
    \end{equation}
    the $A$-Moreau envelope of the potential $V$. It approximates $V$ from below and coincides at the minimizers, with $V^A(u) \nearrow V(u)$ for all $u$ as~$\norm{A}\searrow0$~\cite[\S 1G]{rockafellar_variational_1998}. If $V$ is $L$-weakly convex and $\norm{A}_2 < L^{-1}$, then $V^A$ is known to have gradient\hspace{-.1cm}
    \begin{equation} \label{eq:cbs-approx:env-grad}
        \nabla V^{A}(u) = A^{-1}(u - \prox^{A}_V(u))
    \end{equation}
    (see~\cite[Example 3.9]{Combettes01092014} and~\cite[Theorem 2.26]{rockafellar_variational_1998}), where the \emph{proximal operator} denotes the singleton
    \begin{equation}
        \prox^{A}_V(u) \coloneqq \argmin_v\bigl(V(v) + \frac{1}{2}\norm{v - u}_A^2\bigr).
    \end{equation}
    Let us now take~\cref{eq:ips:lang:lang-C} with~${\mathbf P(\rho_t; U_t) = \mathcal C_\alpha(\rho_t)}$ as a starting point. We introduce a scalar parameter~$\kappa > 0$ and approximate $V=-\log\widehat\pi$ in~\cref{eq:ips:lang:lang-C} by its $A$-Moreau envelope, with $A=\kappa\mathcal C_{\alpha}(\rho_t)$, rescaled by a scalar~$\gamma>0$: $V\approx\gamma V^{\kappa\mathcal C_{\alpha}(\rho_t)}$. We will later choose the parameter~$\gamma$ to counteract the approximation error of the Moreau envelope, with typically $\gamma\approx1$. As opposed to $\nabla V$, which is unknown, $\nabla(\gamma V^{\kappa\mathcal C_{\alpha}(\rho_t)})$ can be rewritten using~\cref{eq:cbs-approx:env-grad}, expressing our approximation to~\cref{eq:ips:lang:lang-C} as
    \begin{equation} \label{eq:cbs-approx:approx-1}
        \dd U_t = -\frac\gamma\kappa\left(U_t - \prox^{\kappa\mathcal C_{\alpha}(\rho_t)}_V(U_t)\right)\,\dd t + \sqrt{2\mathcal C_{\alpha}(\rho_t)}\,\dd W_t.
    \end{equation}
    Instead of a gradient, we now must evaluate the proximal operator.

    \paragraph{Second step: Weighted mean as approximate proximal operator.} Under mild \mbox{conditions} on a probability measure $\rho$ and function $f$, including $f$ having a unique minimizer, we have
    \begin{equation} \label{eq:lcbs:method:generic-laplace}
        \lim_{\beta\to\infty} \frac{\int v \exp(-\beta f(v))\,\mathrm d\rho(v)}{\int \exp(-\beta f(v))\,\mathrm d\rho(v)} = \argmin_v f(v),
    \end{equation}
    as implied by the Laplace principle~\cite{demboLargeDeviationsTechniques2010,millerAppliedAsymptoticAnalysis} (similarly to, e.g.,~\cite{carrilloAnalyticalFrameworkConsensusbased2018a,fornasierConsensusBasedOptimizationMethods2024}). Related is the \emph{quantitative Laplace principle} in~\cite{fornasierConsensusBasedOptimizationMethods2024}, which was used in conjunction with the proximal operator to study convergence of the consensus-based optimization (CBO) algorithm in~\cite{riedlGradientAllYou2023a}. \Cref{eq:lcbs:method:generic-laplace} implies
    \begin{equation} \label{eq:lcbs:method:laplace}
        \prox^{\kappa\mathcal C_{\alpha}(\rho_t)}_V(U_t) = \lim_{\beta\rightarrow\infty} \mu_{\beta,\kappa\mathcal C_{\alpha}(\rho_t)}(\rho_t; U_t),
    \end{equation}
    where we used the weighted mean
    \begin{equation} \label{eq:cbs-approx:w-and-mu-w}
        \mu_{\beta,A}(\rho; u) \coloneqq \frac{\int vw_{\beta,A}(v; u)\,\mathrm d\rho(v)}{\int w_{\beta,A}(v; u)\,\mathrm d\rho(v)}, \qquad w_{\beta,A}(v; u) \coloneqq \exp\Bigl(-\beta\Bigl[V(v) + \frac12\norm{v-u}_A^2\Bigr]\Bigr).
    \end{equation}
    As the smallest eigenvalue of $A$ tends to $\infty$, $\mu_{\beta,A}$ approaches $\mu_\beta$ from~\cref{eq:cbs-approx:mu-and-C-alpha}. The choice~$\rho=\rho_t$ in~\cref{eq:lcbs:method:laplace} was made to enable particle approximations in \cref{sec:comput}.

    A weighted mean was also used to approximate the proximal operator in~\cite{osherHamiltonJacobibasedProximalOperator2023} in the context of optimization methods. Motivated by~\cref{eq:lcbs:method:laplace}, we choose some finite~${\beta>0}$ and replace the proximal operator by a weighted mean as a second approximation:
    \begin{equation} \label{eq:cbs-approx:approx-2}
        \dd U_t = -\frac\gamma\kappa\Bigl(U_t - \mu_{\beta,\kappa\mathcal C_{\alpha}(\rho_t)}(\rho_t; U_t)\Bigr)\,\dd t + \sqrt{2\mathcal C_{\alpha}(\rho_t)}\,\dd W_t.
    \end{equation}
    Note that neither the proximal operator in~\cref{eq:cbs-approx:approx-1} nor the weighted mean in~\cref{eq:cbs-approx:approx-2} may seem inherently easier to compute than the gradient in~\cref{eq:ips:lang:lang-C}. The eventual reason for these approximations is that weighted means are suitable to efficient gradient-free particle discretizations, such as the ones in~\cite{carrilloConsensusbasedSampling2022} and \cref{sec:comput}. A direct particle discretization of~\cref{eq:ips:lang:lang-C} would clearly require gradients. For now we will study the dynamics in McKean--Vlasov form, but one should keep in mind this motivation.

    \paragraph{Recovering CBS in the limit.} If we take the limit $\kappa\to\infty$ in~\cref{eq:cbs-approx:approx-2} and allow $\gamma$ to depend on $\kappa$, then the localized part of the weighted mean vanishes and we get, with $c=\lim_{\kappa\to\infty}\gamma/\kappa$,
    \begin{equation}
        \dd U_t = -c\,\Bigl(U_t - \mu_{\beta}(\rho_t)\Bigr)\,\dd t + \sqrt{2\mathcal C_{\alpha}(\rho_t)}\,\dd W_t.
    \end{equation}
    Choosing $\alpha=\beta$ and $\gamma = (\alpha+1)^{-1}\kappa$ then gives exactly the CBS dynamics~\cref{eq:cbs-approx:cbs}.

    To interpret this link between~\cref{eq:cbs-approx:approx-2} and CBS, we study $V^\mathrm{CBS} \coloneqq \gamma V^{\kappa\mathcal C_{\alpha}(\rho_t)} = (\alpha+1)^{-1}\kappa V^{\kappa\mathcal C_{\alpha}(\rho_t)}$:%
    \begin{equation}
        V^\mathrm{CBS}(u) = (\alpha+1)^{-1}\inf_v\bigl(\kappa V(v) + \frac12\norm{v-u}_{\mathcal C_\alpha(\rho_t)}^2\bigr).
    \end{equation}
    As $\kappa\to\infty$, the infimum can only be reached by choosing $v\in\argmin V$, so
    \begin{equation}
        \lim_{\kappa\to\infty} V^\mathrm{CBS}(u) - \kappa\min V = (\alpha+1)^{-1}\,\frac12\operatorname{dist}_{\mathcal C_\alpha(\rho_t)}(u, \argmin V)^2,
    \end{equation}
    where $\operatorname{dist}_A(u, S) \coloneqq \inf_{v\in S}\norm{u-v}_A$. When $V$ has a unique minimizer $u^*$, this simplifies to
    \begin{equation}
        \lim_{\kappa\to\infty} V^\mathrm{CBS}(u) - \kappa\min V = (\alpha+1)^{-1}\,\frac12\norm{u-u^*}_{\mathcal C_\alpha(\rho_t)}^2.
    \end{equation}
    In other words, in the limit corresponding to CBS (i.e., $\kappa\to\infty$ with $\alpha=\beta$ and $\gamma = (\alpha+1)^{-1}\kappa$), the first approximation step in this section ``infinitely smooths'' the potential $V$ into a quadratic centered at the minimizer of $V$ for the purpose of approximating its gradient.

    \begin{remark}[(Weak) convexity of $V$]
        The above connection between Langevin dynamics and CBS requires convexity of $V$ such that $\nabla V^{\kappa\mathcal C_\alpha(\rho_t)}$ exists for any $\kappa$; the derivation leading up to the approximate Langevin dynamics~\cref{eq:cbs-approx:approx-2} also allows $V$ to be weakly convex with sufficiently small~$\kappa$.
        
        Weakly convex potentials are a natural setting for sampling methods. In addition, since the approximations in this section are only a motivation for CBS and~\cref{eq:cbs-approx:approx-2}, those dynamics might also be effective outside those assumptions. This should be investigated in future work.
    \end{remark}

\section{Localized consensus-based sampling} \label{sec:lcbs}
    Armed with the novel interpretation of CBS in \cref{sec:cbs-approx}, this section derives a new gradient-free sampling method. Our proposed localized consensus-based sampling algorithm retains affine-invariance, is derivative-free, and performs well on non-Gaussian distributions. We study localized CBS in time-continuous, mean-field form; \cref{sec:comput} will then consider discretizations.

    \subsection{Consensus-based sampling without Gaussian assumption} \label{sec:lcbs:glob-prec}
    Recall that our goal is a gradient-free sampling method with good performance on non-Gaussian distributions. We consider the approximations in \cref{sec:cbs-approx} that transform interacting Langevin diffusions to CBS, and adapt them to suit~this~new~\mbox{objective}.

    The $\kappa\to\infty$ limit of~\cref{eq:cbs-approx:approx-2}, leading to CBS, smooths $V$ to a quadratic. Since we want to sample from non-Gaussian $\pi$---i.e., non-quadratic $V$---we keep $\kappa$ finite. In addition, the Moreau envelope approximation~\cref{eq:cbs-approx:envelope} to $V$ suggests that $\kappa$ should be chosen small, while the weighted-mean approximation~\cref{eq:lcbs:method:laplace} to the proximal operator implies that $\beta$ should be large. Finally, we note that the steps leading up to~\cref{eq:cbs-approx:approx-2} are not specific to the preconditioner $\mathcal C_\alpha(\rho_t)$ and allow a general positive definite preconditioner $\mathbf P_t \coloneqq \mathbf P(\rho_t)$. This results in our novel localized CBS dynamics%
    \begin{equation} \label{eq:lcbs:glob-prec:lcbs}
        \dd U_t = -\frac\gamma\kappa\Bigl(U_t - \mu_{\beta,\kappa\mathbf P_t}(\rho_t; U_t)\Bigr)\,\dd t + \sqrt{2\mathbf P_t}\,\dd W_t.
    \end{equation}
    Here, $\kappa$ controls the accuracy of the Moreau envelope approximation (i.e., the amount of ``smoothing'' of $V$) and $\beta$ controls the accuracy of the weighted-mean approximation to the proximal operator (i.e., the accuracy of the gradient estimation). The preconditioner $\mathbf P_t$ can be chosen freely. Suitable choices include
    \begin{enumerate}[label={(\roman*{})}]
        \item a constant matrix $\mathbf P_t = K$;
        \item an unweighted covariance $\mathbf P_t = \mathcal C(\rho_t)$; and
        \item a weighted covariance $\mathbf P_t = \mathcal C_\alpha(\rho_t)$, which is used in classical CBS. Larger $\alpha$ puts more weight on particles with low potential values. This pushes $\mathbf P_t$ more towards the target covariance, rather than to the covariance of $\rho_t$, which may be beneficial when those differ significantly in their scaling.
    \end{enumerate}

    Finally, the parameter $\gamma$ could simply be set to $1$, but other choices may be able to partially mitigate the bias that is due to $\beta < \infty$~and~${\kappa > 0}$. We study this bias in more detail in \cref{sec:lcbs:gauss} as a function of $\gamma$, $\kappa$, and the choice of $\mathbf P_t$. Notably, when~$\pi$ is Gaussian and~$\mathbf P_t$ is an unweighted or weighted covariance---choices (ii) and (iii) above---the bias can be removed fully. The value of $\gamma$ that accomplishes this, given by~\cref{eq:cor:lcbs:gauss:gamma:gamma-C,eq:cor:lcbs:gauss:gamma:gamma-Calpha}, can then be used in~\cref{eq:lcbs:glob-prec:lcbs} even in non-Gaussian~cases~as~a~heuristic.

    \subsection{Localized preconditioners} \label{sec:lcbs:loc-prec}
    This subsection discusses more general preconditioners for the proposed localized CBS method, at the cost of a correction term and additional computational effort in the resulting algorithm. Concretely, we allow a preconditioner~$\mathbf P_{U,t}\coloneqq \mathbf P(\rho_t; U_t)$ that depends on the variable $U_t$ as well as its law~$\rho_t$\footnote{When the preconditioner can depend on $U_t$, a motivation similar to that in \cref{sec:cbs-approx} is still possible. In this case, the property~\cref{eq:cbs-approx:env-grad} gains a quadratic term. As $\norm{A}\rightarrow0$, this term vanishes, which justifies the use of~\cref{eq:cbs-approx:env-grad} as an approximation.}. As discussed in \cref{sec:ips:lang}, this requires an additional drift term $\nabla_u\cdot\mathbf P_{U,t}$:
    \begin{equation} \label{eq:lcbs:loc-prec:lcbs}
        \dd U_t = -\frac\gamma\kappa\Bigl(U_t - \mu_{\beta,\kappa\mathbf P_{U,t}}(\rho_t; U_t)\Bigr)\,\dd t + \nabla_u \cdot \mathbf P_{U,t} + \sqrt{2\mathbf P_{U,t}}\,\dd W_t.
    \end{equation}
    Computation of $\nabla_u\cdot\mathbf P_{U,t}$ is discussed in \cref{sec:comput}. The corresponding Fokker--Planck equation is
    \begin{equation} \label{eq:lcbs:method:fp}
        \partial_t\rho_t(u) = \nabla\cdot\Bigl(\frac\gamma\kappa\rho_t(u)\,\bigl(u-\mu_{\beta,\kappa\mathbf P_{U,t}}(\rho_t; u)\bigr) + \mathbf P_{U,t}\nabla\rho_t(u)\Bigr).
    \end{equation}
    A particular choice of interest for the space-dependent preconditioner is
    \begin{enumerate}[label={(iv)}]
        \item a weighted covariance $\mathbf P_{U,t} = \mathcal C_{\alpha, \alpha\lambda \mathcal C(\rho_t)}(\rho_t; U_t)$ with scalars $\alpha\ge0$ and $\lambda > 0$, where
        \begin{equation} \label{eq:lcbs:loc-prec:C-w}
            \mathcal C_{a,A}(\rho; u) \coloneqq \frac{\int (v - \mu_{a,A}(\rho; u))\otimes(v - \mu_{a,A}(\rho; u))\,w_{a,A}(v;u)\,\mathrm d\rho(v)}{\int w_{a,A}(v; u)\,\mathrm d\rho(v)},
        \end{equation}
    \end{enumerate}
    \Cref{eq:lcbs:loc-prec:C-w} uses the weighted mean and weight function~\cref{eq:cbs-approx:w-and-mu-w}. With this preconditioner, the bias of localized CBS can be removed for Gaussian $\pi$, similarly to the covariances in \cref{sec:lcbs:glob-prec}, with the $\gamma$ value given by~\cref{eq:cor:lcbs:gauss:gamma:gamma-Cw}. This weighted covariance is interesting for comparison to the \emph{polarized CBS} method discussed in \cref{sec:compare:pcbs}, since $\mathcal C_{\alpha, \alpha\lambda \mathcal C(\rho_t)}(\rho; u)$ corresponds to the polarized covariance~\cref{eq:compare:pcbs:stat-pol:C} with a Gaussian kernel\footnote{Polarized CBS does not include $\mathcal C(\rho_t)$ in the distance norm, so the correspondence is not perfect. We add this anisotropy in our covariance weights to ensure affine-invariance (see \cref{sec:lcbs:ai}).} $k(u, v) = \exp(-(2\lambda)^{-1}\norm{u-v}_{\mathcal C(\rho_t)}^2)$. Using such a weighted covariance can incur a significant extra computational cost and, in our experiments, does not seem necessary for good performance of localized CBS.

    \subsection{Analysis for Gaussian distributions} \label{sec:lcbs:gauss}
    While the localized CBS dynamics~\cref{eq:lcbs:loc-prec:lcbs} are designed to sample from non-Gaussian distributions, we will now analyze their behavior in the case where the target distribution is Gaussian, i.e., when~${\pi(u)\propto\widehat\pi(u) = \exp(-\frac12\norm{u - m}_\Sigma^2)}$ for some~$m$ and~$\Sigma$. This can be seen as verifying a minimum requirement: we can hardly expect the method to work well for non-Gaussian distributions if it cannot handle Gaussian ones. Moreover, this analysis results in values for $\gamma$ that we will later use even for non-Gaussian~$\pi$.

    \begin{lemma} \label{lmm:lcbs:gauss:moments}
        Assume that both the target distribution $\pi = \mathcal N(m, \Sigma)$ and the initial condition~${U_0 \sim \mathcal N(m_0, \Sigma_0)}$ are Gaussian. Then, the localized CBS dynamics~\cref{eq:lcbs:loc-prec:lcbs} admit a Gaussian solution $U_t\sim\mathcal N(m_t, \Sigma_t)$ with mean and covariance that satisfy
        \begin{subequations} \label{eq:lmm:lcbs:gauss:moments}
        \begin{align}
            \frac\dd{\dd t} m_t &=-\frac{\gamma\beta}\kappa Q_t\Sigma^{-1}(m_t - m), \label{eq:lmm:lcbs:gauss:moments:evol-m}\\
            \frac\dd{\dd t}\Sigma_t &= 2\Bigl[\mathbf P_{*,t} - \frac\gamma\kappa\Sigma_t + \frac{\gamma\beta}{2\kappa^2}\left(Q_t\mathbf P_{*,t}^{-1}\Sigma_t + \Sigma_t\mathbf P_{*,t}^{-1}Q_t\right)\Bigr], \label{eq:lmm:lcbs:gauss:moments:evol-C}
        \end{align}
        \end{subequations}
        with $Q_t \coloneqq (\beta\Sigma^{-1} + \Sigma_t^{-1} + \frac\beta\kappa\mathbf P_{*,t}^{-1})^{-1}$, if the preconditioner $\mathbf P_{*,t}\coloneqq \mathbf P(\rho_t; U_t)$ does not depend on $U_t$ when $\rho_t = \mathcal N(m_t, \Sigma_t)$. This is the case for the preconditioners considered before:
        \begin{subequations} \label{eq:lmm:lcbs:gauss:moments:C}
        \begin{align}
            \mathbf P(\rho_t; U_t) &= K, \label{eq:lmm:lcbs:gauss:moments:C:K}\\
            \mathbf P(\rho_t; U_t) &= \mathcal C(\rho_t) = \Sigma_t, \label{eq:lmm:lcbs:gauss:moments:C:C}\\
            \mathbf P(\rho_t; U_t) &= \mathcal C_\alpha(\rho_t) = (\alpha\Sigma^{-1}+\Sigma_t^{-1})^{-1}, \label{eq:lmm:lcbs:gauss:moments:C:Calpha}\\
            \mathbf P(\rho_t; U_t) &= \mathcal C_{\alpha, \alpha\lambda\mathcal C(\rho_t)}(\rho_t; U_t) = (\alpha\Sigma^{-1} + (1 + \lambda^{-1})\Sigma_t^{-1})^{-1}, \label{eq:lmm:lcbs:gauss:moments:C:Cw}
        \end{align}
        \end{subequations}
        with parameters $\alpha\ge 0$, $\lambda>0$, and $K\succ0$.
    \end{lemma}
    \begin{proof}
        Assume $\rho_t = \mathcal N(m_t, \Sigma_t)$. From the condition that $\mathbf P_{*,t}$ not depend on~$U_t$ follows that~${\nabla_u\cdot\mathbf P_{*,t} = 0}$. \Cref{sec:apdx} derives the rightmost terms in~\cref{eq:lmm:lcbs:gauss:moments:C:C,eq:lmm:lcbs:gauss:moments:C:Calpha,eq:lmm:lcbs:gauss:moments:C:Cw} and
        \begin{equation} \label{eq:lmm:lcbs:gauss:moments:mu-w}
            \mu_{\beta, \kappa\mathbf P_{*,t}}(\rho_t; U_t) = (\beta\Sigma^{-1} + \Sigma_t^{-1} + \frac\beta\kappa\mathbf P_{*,t}^{-1})^{-1}(\beta\Sigma^{-1}m + \Sigma_t^{-1}m_t + \frac\beta\kappa\mathbf P_{*,t}^{-1}U_t).
        \end{equation}
        In this case,~\cref{eq:lcbs:loc-prec:lcbs} becomes a \emph{linear SDE}: the drift term depends on $U_t$ in an affine way and the diffusion term is $U_t$-independent. Following, e.g.,~\cite[Section 6.1]{sarkkaAppliedStochasticDifferential2019}, the SDE admits a Gaussian solution with mean and covariance that satisfy
        \begin{align}
        &\begin{aligned}
            \frac\dd{\dd t} m_t &= -\frac\gamma\kappa(I - \frac\beta\kappa Q_t\mathbf P_{*,t}^{-1}) m_t + \frac\gamma\kappa Q_t\,(\beta\Sigma^{-1}m + \Sigma_t^{-1}m_t)\\
            &= -\frac\gamma\kappa\Bigl[(I - Q_t\,(Q_t^{-1} - \beta\Sigma^{-1}))m_t - Q_t\beta\Sigma^{-1}m\Bigr] = -\frac{\gamma\beta}\kappa Q_t\Sigma^{-1}\,(m_t - m),
        \end{aligned}\\
            &\kern1.5pt\frac\dd{\dd t} \Sigma_t = -\frac\gamma\kappa(I - \frac\beta\kappa Q_t\mathbf P_{*,t}^{-1})\Sigma_t -\frac\gamma\kappa\Sigma_t\,(I - \frac\beta\kappa \mathbf P_{*,t}^{-1}Q_t) + 2\mathbf P_{*,t}.
        \end{align}
    \end{proof}

    \begin{corollary} \label{cor:lcbs:gauss:gamma}
        Assume that the target distribution $\pi=\mathcal N(m, \Sigma)$ is Gaussian. The localized CBS dynamics admit $\pi$ as a stationary distribution in the following~cases.
        
        \begin{enumerate}[label={(\roman*{})}]
            \item The preconditioner is the unweighted covariance $\mathbf P_{U,t} = \mathcal C(\rho_t)$ and
            \begin{equation} \label{eq:cor:lcbs:gauss:gamma:gamma-C}
                \gamma = \kappa + \frac\beta{\beta+1}.
            \end{equation}
            \item The preconditioner is the weighted covariance $\mathbf P_{U,t} = \mathcal C_\alpha(\rho_t)$ with $\alpha\ge0$ and
            \begin{equation} \label{eq:cor:lcbs:gauss:gamma:gamma-Calpha}
                \gamma = (\alpha+1)^{-1}\kappa + \frac\beta{\beta+1}.
            \end{equation}
            \item The preconditioner is the weighted covariance $\mathbf P_{U,t} = \mathcal C_{\alpha, \alpha\lambda\mathcal C(\rho_t)}(\rho_t; U_t)$ with $\alpha\ge0$, $\lambda>0$, and
            \begin{equation} \label{eq:cor:lcbs:gauss:gamma:gamma-Cw}
                \gamma = \Bigl(\frac1\lambda + \alpha+1\Bigr)^{-1}\kappa + \frac\beta{\beta + 1}.
            \end{equation}
        \end{enumerate}
    \end{corollary}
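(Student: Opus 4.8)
The plan is to recognize that, for a Gaussian target, a stationary distribution must be a fixed point of the moment ODEs established in \cref{lmm:lcbs:gauss:moments}. Since the dynamics admit a Gaussian solution whenever $\rho_t = \mathcal N(m_t, \Sigma_t)$, requiring $\pi = \mathcal N(m, \Sigma)$ to be stationary reduces exactly to checking that the choice $(m_t, \Sigma_t) = (m, \Sigma)$ annihilates the right-hand sides of both \cref{eq:lmm:lcbs:gauss:moments:evol-m} and \cref{eq:lmm:lcbs:gauss:moments:evol-C}. The mean equation is immediate: substituting $m_t = m$ into \cref{eq:lmm:lcbs:gauss:moments:evol-m} gives $\dot m_t = 0$ for every value of $\gamma$, so the mean imposes no constraint. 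Hence the entire content of the corollary lies in the covariance condition $\dot\Sigma_t = 0$ evaluated at $\Sigma_t = \Sigma$, which I then solve for $\gamma$ in each of the three cases.

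The key simplifying observation is that each of the three preconditioners listed in \cref{eq:lmm:lcbs:gauss:moments:C:C,eq:lmm:lcbs:gauss:moments:C:Calpha,eq:lmm:lcbs:gauss:moments:C:Cw}, when evaluated at $\Sigma_t = \Sigma$, is a scalar multiple $\mathbf C_{*,t} = s^{-1}\Sigma$ of the target covariance, with $s = 1$ for the unweighted covariance $\mathcal C$, $s = \alpha + 1$ for the weighted covariance $\mathcal C_\alpha$, and $s = \lambda^{-1} + \alpha + 1$ for $\mathcal C_{\alpha, \lambda\mathcal C}$. Feeding $\mathbf C_{*,t} = s^{-1}\Sigma$ into the definition of $P_t$ then yields $P_t = c^{-1}\Sigma$ with $c = \beta + 1 + \beta s/\kappa$. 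Because $\mathbf C_{*,t}$, $P_t$, and $\Sigma$ are all scalar multiples of $\Sigma$ or its inverse, they commute simultaneously; in particular the symmetrized product $P_t\mathbf C_{*,t}^{-1}\Sigma + \Sigma P_t\mathbf C_{*,t}^{-1}$ collapses to $2(s/c)\Sigma$. Consequently the whole bracket in \cref{eq:lmm:lcbs:gauss:moments:evol-C} becomes a scalar multiple of $\Sigma$, reducing the matrix fixed-point equation to a single scalar equation.

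Carrying this out, the stationarity condition $\dot\Sigma_t = 0$ reduces to $s^{-1} - \gamma/\kappa + \gamma\beta s/(\kappa^2 c) = 0$. Using $\kappa c = \kappa(\beta+1) + \beta s$ to simplify $1 - \beta s/(\kappa c) = \kappa(\beta+1)/(\kappa c)$ and solving for $\gamma$ gives the unified formula $\gamma = \kappa/s + \beta/(\beta+1)$. Substituting the three values $s = 1$, $s = \alpha+1$, and $s = \lambda^{-1} + \alpha + 1$ recovers precisely \cref{eq:cor:lcbs:gauss:gamma:gamma-C}, \cref{eq:cor:lcbs:gauss:gamma:gamma-Calpha}, and \cref{eq:cor:lcbs:gauss:gamma:gamma-Cw}, respectively.

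I expect no serious obstacle here, as the corollary is essentially a fixed-point verification of the lemma's ODEs. The only points requiring care are confirming that each preconditioner indeed evaluates to a scalar multiple of $\Sigma$ at $\Sigma_t = \Sigma$—which is exactly what the explicit formulas in \cref{lmm:lcbs:gauss:moments} supply—and invoking the resulting simultaneous commutativity so that the matrix equation genuinely reduces to the scalar one. I would also note that this argument establishes stationarity only; it does not by itself address whether the moments converge to $(m, \Sigma)$, which would require a separate stability analysis of \cref{eq:lmm:lcbs:gauss:moments}.
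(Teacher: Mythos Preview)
Your proposal is correct and follows essentially the same route as the paper: invoke the moment ODEs from \cref{lmm:lcbs:gauss:moments}, observe that $\dot m_t=0$ is automatic at $m_t=m$, and reduce $\dot\Sigma_t=0$ at $\Sigma_t=\Sigma$ to a scalar condition via commutativity of $\mathbf C_{*,t}$ with $\Sigma$. Your parametrization $\mathbf C_{*,t}=s^{-1}\Sigma$ yielding the unified formula $\gamma=\kappa/s+\beta/(\beta+1)$ is slightly slicker than the paper's intermediate condition $\kappa I=\bigl(\gamma-\tfrac{\beta}{\beta+1}\bigr)\Sigma\,\mathbf C_{*,t}^{-1}$, but the underlying argument is the same.
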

    \begin{proof}
        If $\pi$ is a stationary distribution of localized CBS, then ${\partial_t\rho_t(u) = 0}$ when~${\rho_t = \pi}$. Since then both $\rho_t$ and $\pi$ are Gaussian, we can use \cref{lmm:lcbs:gauss:moments}, fill in an expression for the preconditioner such as~\cref{eq:lmm:lcbs:gauss:moments:C}, and require $\frac\dd{\dd t} m_t = 0$ and $\frac\dd{\dd t} \Sigma_t = 0$ when $m_t=m$ and $\Sigma_t = \Sigma$. Clearly, $\frac\dd{\dd t} m_t$ as given by~\cref{eq:lmm:lcbs:gauss:moments:evol-m} is zero irrespective of $\gamma$ and $\mathbf P_{*,t}$ when $m_t=m$. We also have
        \begin{equation} \label{eq:cor:lcbs:gauss:gamma:Sigmadot-1}
        \begin{aligned}
            \frac\dd{\dd t}\Sigma_t &= 2\Bigl[\mathbf P_{*,t} - \frac\gamma\kappa\Sigma + \frac{\gamma\beta}{2\kappa^2}\left(Q_t\mathbf P_{*,t}^{-1}\Sigma + \Sigma \mathbf P_{*,t}^{-1}Q_t\right)\Bigr].
        \end{aligned}
        \end{equation}
        When $\mathbf P_{*,t}$ and $\Sigma_t = \Sigma$ commute---which is the case for the three preconditioners considered in this corollary, as seen in~\cref{eq:lmm:lcbs:gauss:moments:C}---we can simplify~\cref{eq:cor:lcbs:gauss:gamma:Sigmadot-1} to
        \begin{equation}
        \begin{aligned}
            \frac\dd{\dd t}\Sigma_t &= 2\Bigl[\mathbf P_{*,t} + \frac\gamma\kappa\Bigl(\bigl(\kappa\frac{\beta+1}\beta\mathbf P_{*,t}\Sigma^{-1} + I\bigr)^{-1} - I\Bigr)\Sigma\Bigr]\\
            &= 2\Bigl[\mathbf P_{*,t} - \frac\gamma\kappa\bigl(\kappa\frac{\beta+1}\beta\mathbf P_{*,t}\Sigma^{-1} + I\bigr)^{-1}\bigl(\kappa\frac{\beta+1}\beta\mathbf P_{*,t}\Sigma^{-1}\bigr)\Sigma\Bigr]\\
            &= 2\Bigl[\mathbf P_{*,t} - \gamma\,\bigl(\kappa\Sigma^{-1} + \frac\beta{\beta+1}\mathbf P_{*,t}^{-1}\bigr)^{-1}\Bigr],
        \end{aligned}
        \end{equation}
        which, after some algebraic manipulations, can be seen to equal zero when
        \begin{equation} \label{eq:cor:lcbs:gauss:gamma:cond}
            \kappa I = \Bigl(\gamma - \frac\beta{\beta+1}\Bigr)\Sigma\mathbf P_{*,t}^{-1}.
        \end{equation}
    \Cref{eq:lmm:lcbs:gauss:moments:C:Cw} becomes $\mathbf P_{*,t} = (\lambda^{-1} + \alpha+1)\Sigma$ when $\Sigma_t = \Sigma$, which means that~\cref{eq:cor:lcbs:gauss:gamma:cond} holds when $\gamma$ is given by~\cref{eq:cor:lcbs:gauss:gamma:gamma-Cw}. The other cases correspond to specific choices for $\lambda$ and $\alpha$.
    \end{proof}
    A statement such as \cref{cor:lcbs:gauss:gamma} about the preconditioner $\mathbf P_{*,t} = K$ is not possible, since there does not exist a $\gamma$ for which~\cref{eq:cor:lcbs:gauss:gamma:Sigmadot-1} is satisfied for all $\Sigma$ in this case.

    \begin{remark}[Non-Gaussian performance] \label{rem:lcbs:gauss:nongauss}
        In the non-Gaussian case, localized CBS with finite parameters $(\beta, \kappa)$ will not sample from the exact target distribution $\pi$ in general. Characterizing the well-posedness of~\cref{eq:lcbs:loc-prec:lcbs} and the relation between $\pi$ and a possible stationary distribution $\rho_\infty$ of localized CBS is an open question. 
    \end{remark}

    \subsection{Affine-invariance} \label{sec:lcbs:ai}
    We now study affine-invariance of localized CBS.

    \begin{lemma}
        The SDE~\cref{eq:lcbs:loc-prec:lcbs} is affine-invariant (see \cref{def:ips:lang:ai}) when the preconditioner is given by $\mathbf P(\rho_t; U_t) = \mathcal C(\rho_t)$, $\mathbf P(\rho_t; U_t) = \mathcal C_\alpha(\rho_t)$, or $\mathbf P(\rho_t; U_t) = \mathcal C_{\alpha, \alpha\lambda \mathcal C(\rho_t)}(\rho_t; U_t)$.
    \end{lemma}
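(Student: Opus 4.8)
The plan is to verify \Cref{def:ips:lang:ai} directly: for a solution $U_t$ of \cref{eq:lcbs:loc-prec:lcbs} parametrized by $\widehat\pi$ with law $\rho_t$, I would form the affinely transformed process $Z_t \coloneqq M^{-1}(U_t - b)$ and show that it solves \cref{eq:lcbs:loc-prec:lcbs} with $\widehat\pi$ replaced by $\tilde\pi$. Since $Z_0$ then has law $\tilde\rho_0$ by construction, uniqueness in law of the transformed dynamics forces its time-$t$ law $\tilde\rho_t$ to satisfy $\tilde\rho_t(z) = \abs{M}\rho_t(Mz+b)$, which is exactly the required identity. Throughout I write $u = Mz+b$ and use the pushforward relation $\int f(w)\,\mathrm d\tilde\rho(w) = \int f(M^{-1}(v-b))\,\mathrm d\rho(v)$, which follows from $\tilde\rho(z) = \abs{M}\rho(Mz+b)$ by the change of variables $v = Mw+b$.

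The central step is to record how the weighted mean and the preconditioner transform. Substituting $w = M^{-1}(v-b)$ in \cref{eq:cbs-approx:mu-w}, using $\tilde\pi(w)^a = \widehat\pi(v)^a$ and the identity $\norm{w-z}_A = \norm{v-u}_{MAM^T}$ (immediate from the weighted-norm definition in the Notation section, since $w-z = M^{-1}(v-u)$ and $(MAM^T)^{-1} = M^{-T}A^{-1}M^{-1}$), I obtain
\[
    \tilde\mu_{a,A}(\tilde\rho; z) = M^{-1}\bigl(\mu_{a, MAM^T}(\rho; u) - b\bigr).
\]
An analogous substitution in \cref{eq:lcbs:loc-prec:C-w}, together with $\mu(\tilde\rho) = M^{-1}(\mu(\rho)-b)$ and $\mu_\alpha(\tilde\rho) = M^{-1}(\mu_\alpha(\rho)-b)$ so that the centered increments satisfy $w-\mu(\tilde\rho) = M^{-1}(v-\mu(\rho))$, yields for the first two preconditioners the key covariance rule
\[
    \tilde{\mathbf C}(\tilde\rho; z) = M^{-1}\,\mathbf C(\rho; u)\,M^{-T};
\]
the third (space-dependent) preconditioner requires the two-level argument discussed below.

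Combining the two rules produces the cancellation that makes the drift affine-invariant. In the transformed dynamics the weighted mean uses anisotropy $A = \kappa\tilde{\mathbf C}_{Z,t}/\beta$, and since $MAM^T = \kappa\,M(M^{-1}\mathbf C_{U,t}M^{-T})M^T/\beta = \kappa\mathbf C_{U,t}/\beta$, the weighted-mean rule collapses to $\tilde\mu_{\beta,\kappa\tilde{\mathbf C}_{Z,t}/\beta}(\tilde\rho_t; Z_t) = M^{-1}\bigl(\mu_{\beta,\kappa\mathbf C_{U,t}/\beta}(\rho_t; U_t) - b\bigr)$. Applying $M^{-1}$ to the original drift and using $M^{-1}U_t = Z_t + M^{-1}b$ then reproduces exactly $-\tfrac\gamma\kappa\bigl(Z_t - \tilde\mu_{\beta,\kappa\tilde{\mathbf C}_{Z,t}/\beta}(\tilde\rho_t; Z_t)\bigr)$. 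For the diffusion I would compare quadratic variations: $M^{-1}\sqrt{2\mathbf C_{U,t}}\,\mathrm dW_t$ has covariation $2M^{-1}\mathbf C_{U,t}M^{-T}\,\mathrm dt = 2\tilde{\mathbf C}_{Z,t}\,\mathrm dt$, matching that of $\sqrt{2\tilde{\mathbf C}_{Z,t}}\,\mathrm d\tilde W_t$, so the two diffusion terms agree in law through a suitable Brownian motion $\tilde W_t$; as the law of the process is determined by its drift and diffusion coefficients, this is enough.

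The remaining term is the correction $\nabla_u\cdot\mathbf C_{U,t}$, present only for $\mathcal C_{\alpha,\lambda\mathcal C(\rho_t)}(\rho_t; U_t)$, since the other two preconditioners are $U_t$-independent and contribute no correction. Writing $\tilde C_{ij}(z) = \sum_{k,l}(M^{-1})_{ik}(M^{-1})_{jl}\,C_{kl}(Mz+b)$ and differentiating by the chain rule, the factor $\sum_j (M^{-1})_{jl}M_{pj} = \delta_{pl}$ telescopes the sum to give $\nabla_z\cdot\tilde{\mathbf C} = M^{-1}(\nabla_u\cdot\mathbf C)$, so $M^{-1}\nabla_u\cdot\mathbf C_{U,t}$ is precisely the transformed correction. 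I expect the main obstacle to be the bookkeeping for this space-dependent preconditioner \cref{eq:lmm:lcbs:gauss:moments:C:Cw}: the weight in \cref{eq:lcbs:loc-prec:C-w} itself carries the anisotropy $\lambda\mathcal C(\rho_t)$, so one must check the two nested transformations are consistent, namely that $\lambda\tilde{\mathcal C}(\tilde\rho) = M^{-1}(\lambda\mathcal C(\rho))M^{-T}$ and hence $\norm{w-z}_{\lambda\tilde{\mathcal C}(\tilde\rho)} = \norm{v-u}_{\lambda\mathcal C(\rho)}$, which is exactly the affine-invariance role of the anisotropy noted when \cref{eq:lcbs:loc-prec:C-w} was introduced. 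Once this two-level transformation is in place, assembling the three transformed terms shows $Z_t$ solves the $\tilde\pi$-dynamics and closes the proof.
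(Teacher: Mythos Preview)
Your proposal is correct and rests on the same two transformation identities the paper uses, namely $\tilde{\mathbf C}(\tilde\rho;z)=M^{-1}\mathbf C(\rho;u)M^{-T}$ and $\tilde\mu_{a,A}(\tilde\rho;z)=M^{-1}(\mu_{a,MAM^T}(\rho;u)-b)$, together with the nested check $\tilde{\mathcal C}(\tilde\rho)=M^{-1}\mathcal C(\rho)M^{-T}$ for the third preconditioner. The difference is the level at which the argument is carried out. The paper works with the Fokker--Planck equation \cref{eq:lcbs:method:fp}: it defines $\bar\rho_t(z)=\abs{M}\rho_t(Mz+b)$ and verifies directly that $\bar\rho_t$ solves the $\tilde\pi$-parametrized PDE. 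Because the correction term $\nabla_u\cdot\mathbf C_{U,t}$ has already been absorbed into the clean form of \cref{eq:lcbs:method:fp}, the paper never needs your separate chain-rule computation for $\nabla_z\cdot\tilde{\mathbf C}$, and it also avoids the diffusion-term subtlety you handle via quadratic variation and an appeal to uniqueness in law. Your SDE-level route is equally valid and arguably more transparent about where each piece of the dynamics goes under the transformation; the paper's PDE-level route is shorter and sidesteps the measure-theoretic bookkeeping (choice of $\tilde W_t$, well-posedness of the McKean--Vlasov problem) that your argument implicitly relies on.
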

    \begin{proof}
        To prove affine-invariance following \cref{def:ips:lang:ai}, we use the Fokker--Planck equation~\cref{eq:lcbs:method:fp} and make the dependence on $\widehat\pi$ explicit:
        \begin{equation}
            \partial_t\rho_t(u) = \nabla\cdot\Bigl(\frac\gamma\kappa\rho_t(u)\,\bigl(u-\mu_{\beta,\kappa\mathbf P^{\widehat\pi}(\rho_t; u)}^{\widehat\pi}(\rho_t; u)\bigr) + \mathbf P^{\widehat\pi}(\rho_t; u)\nabla\rho_t(u)\Bigr).
        \end{equation}
        With the transformed $\tilde\pi(z) = \widehat\pi(Mz+b)$, the Fokker--Planck equation looks like
        \begin{equation} \label{eq:lmm:lcbs:ai:ai:fp-trans}
            \partial_t\tilde\rho_t(z) = \nabla\cdot\Bigl(\frac\gamma\kappa\tilde\rho_t(z)\,\bigl(z-\mu_{\beta,\kappa\mathbf P^{\tilde\pi}(\tilde\rho_t; z)}^{\tilde\pi}(\tilde\rho_t; z)\bigr) + \mathbf P^{\tilde\pi}(\tilde\rho_t; z)\nabla\tilde\rho_t(z)\Bigr).
        \end{equation}
        Now define
        \begin{equation}
            \bar\rho_t(z) = \abs M\rho_t(Mz+b);
        \end{equation}
        then, by \cref{def:ips:lang:ai}, localized CBS is affine-invariant if~\cref{eq:lmm:lcbs:ai:ai:fp-trans} holds for $\tilde\rho_t = \bar\rho_t$. As a first step, one can check that these preconditioners satisfy (with $u=Mz+b$)
        \begin{equation}
            \mathbf P^{\widehat\pi}(\rho_t; u) = M\mathbf P^{\tilde\pi}(\bar\rho_t; z)M^T
        \end{equation}
        and, thus, that
        \begin{equation}
            \mu_{\beta,\kappa\mathbf P^{\widehat\pi}(\rho_t; u)}^{\widehat\pi}(\rho_t; u) = M\mu_{\beta,\kappa\mathbf P^{\tilde\pi}(\bar\rho_t; z)}^{\tilde\pi}(\bar\rho_t; z) + b.
        \end{equation}
        We can then write
        \begin{align*}
            \partial_t\bar\rho_t(z) &= \abs M\partial_t\rho_t(u)\\
            &= \abs M\nabla_u\cdot\Bigl(\frac\gamma\kappa\rho_t(u)\,(u-\mu_{\beta,\kappa\mathbf P^{\widehat\pi}(\rho_t; u)}^{\widehat\pi}(\rho_t; u)) + \mathbf P^{\widehat\pi}(\rho_t; u)\nabla_u\rho_t(u)\Bigr)\\
            &= \nabla_u\cdot\Bigl(\frac\gamma\kappa M\bar\rho_t(z)\,(z - \mu_{\beta,\kappa\mathbf P^{\tilde\pi}(\bar\rho_t; z)}^{\tilde\pi}(\bar\rho_t; z)) + M\mathbf P^{\tilde\pi}(\bar\rho_t; z)\nabla_z\bar\rho_t(z)\Bigr)\\
            &= \nabla_z\cdot\Bigl(\frac\gamma\kappa\bar\rho_t(z)\,(z - \mu_{\beta,\kappa\mathbf P^{\tilde\pi}(\bar\rho_t; z)}^{\tilde\pi}(\bar\rho_t; z)) + \mathbf P^{\tilde\pi}(\bar\rho_t; z)\nabla_z\bar\rho_t(z)\Bigr).
        \end{align*}
        This proves affine-invariance.
    \end{proof}

\section{Comparison to alternative methods} \label{sec:compare}
    This section compares the dynamics~\cref{eq:lcbs:loc-prec:lcbs} of localized CBS to those of various other McKean--Vlasov-based samplers. \Cref{sec:compare:cbs} discusses classical CBS, after which \cref{sec:compare:pcbs,sec:compare:laldi} compares our method to two algorithms that use localization in a similar way. Various other dynamics are briefly discussed in \cref{sec:compare:other}. A detailed numerical comparison to some of these methods follows in \cref{sec:num}; here, we focus on different aspects of the dynamics and how these may affect the properties of the resulting algorithms.

    \subsection{Consensus-based sampling} \label{sec:compare:cbs}
    We derived localized CBS in \cref{sec:lcbs} as a generalization of classical CBS to non-Gaussian sampling problems. We will now show that, when both the target distribution $\pi$ and the initial particle distribution $\rho_0$ are Gaussian, the localized CBS dynamics~\cref{eq:lcbs:loc-prec:lcbs} with a certain weighted covariance as preconditioner admit the same solution as the CBS dynamics~\cref{eq:cbs-approx:cbs}.

    \begin{lemma} \label{lmm:compare:cbs:equiv}
        Let $U_t \sim \mathcal N(m_t, \Sigma_t)$ follow the localized CBS dynamics~\cref{eq:lcbs:loc-prec:lcbs} preconditioned with the weighted covariance $\mathbf P(\rho; u) = \mathcal C_{\alpha, \alpha\lambda\mathcal C(\rho)}(\rho; u)$ and let $U_t' \sim \mathcal N(m_t', \Sigma_t')$ follow the CBS dynamics~\cref{eq:cbs-approx:cbs}. Let both the target density $\pi = \mathcal N(m, \Sigma)$ and the initial densities of both dynamics ${\mathcal N(m_0, \Sigma_0) = \mathcal N(m_0', \Sigma_0')}$ be Gaussian.

        For any choice of the CBS parameter\footnote{We write $\alpha'$ for the CBS parameter to distinguish it from the $\alpha$ parameter of localized CBS.} $\alpha' > 0$ and with localized CBS parameters
        \begin{equation} \label{eq:lmm:compare:cbs:equiv:params}
            \alpha > 0, \qquad \kappa = \alpha, \qquad \beta = \alpha', \qquad \text{and} \qquad \lambda = \frac{\alpha'}{\alpha - \alpha'},
        \end{equation}
        it holds that $m_t = m'_{ct}$ and $\Sigma_t = \Sigma'_{ct}$ with
        \begin{equation}
            c = \frac{\alpha'}{\alpha\,(\alpha'+1)}.
        \end{equation}
    \end{lemma}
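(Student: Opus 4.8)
The plan is to use that, by \cref{lmm:lcbs:gauss:moments}, the localized CBS dynamics admit a Gaussian solution whose mean and covariance obey closed-form ODEs, and to compare these against the analogous moment ODEs for classical CBS. Concretely, I would show that under the parameter choices \cref{eq:lmm:compare:cbs:equiv:params} the localized CBS moment equations are exactly a constant time-rescaling, by the factor $c$, of the CBS moment equations; uniqueness of solutions with the shared initial data then yields $m_t = m_{ct}'$ and $\Sigma_t = \Sigma_{ct}'$.

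First I would record the CBS moment ODEs in the Gaussian case. Since $\pi = \mathcal N(m, \Sigma)$ and $\rho_t' = \mathcal N(m_t', \Sigma_t')$ are Gaussian, the CBS dynamics \cref{eq:cbs-approx:cbs} form a linear SDE with weighted covariance $\mathcal C_{\alpha'}(\rho_t') = (\alpha'\Sigma^{-1} + (\Sigma_t')^{-1})^{-1}$, giving (as in \cite{carrilloConsensusbasedSampling2022}, or by repeating the linear-SDE argument in the proof of \cref{lmm:lcbs:gauss:moments})
\begin{equation*}
    \dot m_t' = -\alpha'\,\mathcal C_{\alpha'}(\rho_t')\,\Sigma^{-1}(m_t' - m), \qquad \dot\Sigma_t' = -2\Sigma_t' + 2(\alpha'+1)\,\mathcal C_{\alpha'}(\rho_t').
\end{equation*}
For the localized side I would insert the preconditioner \cref{eq:lmm:lcbs:gauss:moments:C:Cw} into the ODEs \cref{eq:lmm:lcbs:gauss:moments}. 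The key algebraic observation is that \cref{eq:lmm:compare:cbs:equiv:params} makes $1 + \lambda^{-1} = \alpha/\alpha'$, so that $\mathbf C_{*,t} = \tfrac{\alpha'}{\alpha}\,(\alpha'\Sigma^{-1} + \Sigma_t^{-1})^{-1}$ and $P_t = \tfrac12\,(\alpha'\Sigma^{-1} + \Sigma_t^{-1})^{-1}$; both are scalar multiples of the same matrix, namely the CBS weighted covariance evaluated at $\Sigma_t$.

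The crucial simplification, and the step I expect to carry the proof, is that this shared-proportionality structure makes the quadratic cross term collapse without any commutativity assumption: since $P_t$ and $\mathbf C_{*,t}$ are both scalar multiples of one matrix, $P_t\mathbf C_{*,t}^{-1} = \tfrac{\alpha}{2\alpha'}I$ is a multiple of the identity, so the last term in \cref{eq:lmm:lcbs:gauss:moments:evol-C} reduces to a multiple of $\Sigma_t$ and the covariance ODE becomes $\dot\Sigma_t = \tfrac{2\alpha'}{\alpha}\,\mathcal C_{\alpha'} - \tfrac{\gamma}{\alpha}\Sigma_t$. Matching this, and the mean ODE \cref{eq:lmm:lcbs:gauss:moments:evol-m} after the same substitution, against $c$ times the CBS equations evaluated at time $ct$ then forces both $c = \tfrac{\alpha'}{\alpha(\alpha'+1)}$ and $\gamma = \tfrac{2\alpha'}{\alpha'+1}$; reassuringly, this value of $\gamma$ is exactly the stationary choice \cref{eq:cor:lcbs:gauss:gamma:gamma-Cw} of \cref{cor:lcbs:gauss:gamma}, so the comparison is made for localized CBS run with its recommended parameter.

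Finally, with the rescaled flows shown to coincide and with identical initial conditions $m_0 = m_0'$ and $\Sigma_0 = \Sigma_0'$, the Picard--Lindel\"of uniqueness theorem gives $m_t = m_{ct}'$ and $\Sigma_t = \Sigma_{ct}'$ for all $t$, which is the claim. The main obstacle is purely the covariance algebra: one must verify that the two additive terms and the one quadratic term in \cref{eq:lmm:lcbs:gauss:moments:evol-C} combine, via the scalar-multiple structure of $P_t$, $\mathbf C_{*,t}$, and $\mathcal C_{\alpha'}$, into precisely the affine right-hand side of the time-rescaled CBS covariance flow, leaving no residual non-commuting pieces.
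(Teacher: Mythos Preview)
Your proposal is correct and follows essentially the same route as the paper: both arguments insert the parameter choices \cref{eq:lmm:compare:cbs:equiv:params} into the moment ODEs of \cref{lmm:lcbs:gauss:moments}, exploit that $P_t$ and $\mathbf C_{*,t}$ become scalar multiples of $(\alpha'\Sigma^{-1}+\Sigma_t^{-1})^{-1}$ so that $P_t\mathbf C_{*,t}^{-1}$ is a multiple of the identity, and identify the result as a time-rescaled version of the CBS moment equations. The only cosmetic difference is that you write the CBS covariance ODE in the direct linear-SDE form $\dot\Sigma_t'=-2\Sigma_t'+2(\alpha'+1)\mathcal C_{\alpha'}(\rho_t')$ and match coefficients, whereas the paper cites the equivalent factored form $\dot\Sigma_t'=-2\alpha'\Sigma_t'(\Sigma+\alpha'\Sigma_t')^{-1}(\Sigma_t'-\Sigma)$ from \cite{carrilloConsensusbasedSampling2022} and manipulates the localized side into that shape; the key simplification and the resulting $c$ and $\gamma=2\alpha'/(\alpha'+1)$ are identical.
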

    \begin{proof}
        In~\cite{carrilloConsensusbasedSampling2022}, it is shown that
        \begin{equation} \label{eq:lmm:compare:cbs:equiv:dot-Sigma'}
            \frac\dd{\dd t}\Sigma'_t = -2\alpha'\Sigma'_t\,(\Sigma + \alpha'\Sigma'_t)^{-1} (\Sigma'_t - \Sigma).
        \end{equation}
        On the other hand, filling in~\cref{eq:lmm:compare:cbs:equiv:params} into~\cref{eq:lmm:lcbs:gauss:moments:evol-C} with~\cref{eq:lmm:lcbs:gauss:moments:C:Cw} results in
        \begin{align*}
            \frac\dd{\dd t}\Sigma_t &= 2\Bigl[\mathbf P - 2\frac{\alpha'}{\alpha(\alpha'+1)}\Sigma_t + \frac{(\alpha')^2}{\alpha^2(\alpha'+1)}\left(Q_t\mathbf P^{-1}\Sigma_t + \Sigma_t\mathbf P^{-1}Q_t\right)\Bigr]\\
            &= 2\Bigl[\mathbf P - 2c\Sigma_t + c\frac{\alpha'}\alpha\bigl(\frac\alpha{2\alpha'}I \Sigma_t + \Sigma_t \frac\alpha{2\alpha'}I\bigr)\Bigr]\\
            &= 2\Bigl[\bigl(\alpha\Sigma^{-1}+\frac\alpha{\alpha'}\Sigma_t^{-1}\bigr)^{-1} - c\Sigma_t\Bigr]\\
            &= 2\Bigl[c\bigl(\Sigma_t -\alpha'\Sigma_t\,(\Sigma+\alpha'\Sigma_t)^{-1}(\Sigma_t - \Sigma)\bigr) - c\Sigma_t\Bigr]\\
            &= -2c\alpha'\Sigma_t\,(\Sigma+\alpha'\Sigma_t)^{-1}(\Sigma_t-\Sigma),
        \end{align*}
        where we noted that $\gamma=2\frac{\alpha'}{\alpha'+1}$. This is~\cref{eq:lmm:compare:cbs:equiv:dot-Sigma'} accelerated with a factor $c$; hence, $\Sigma_t = \Sigma'_{ct}$.

        Again from~\cite{carrilloConsensusbasedSampling2022}, we obtain
        \begin{equation} \label{eq:lmm:compare:cbs:equiv:dot-m'}
            \frac\dd{\dd t} m'_t = -\alpha'\Sigma'_t\,(\Sigma + \alpha'\Sigma'_t)^{-1}(m'_t - m).
        \end{equation}
        For localized CBS,~\cref{eq:lmm:lcbs:gauss:moments:evol-m} becomes
        \begin{align*}
            \frac\dd{\dd t} m_t &= -2\alpha'c Q_t\Sigma^{-1}\,(m_t - m) = -c\alpha'\Sigma_t\,(\Sigma+\alpha'\Sigma_t)^{-1}(m_t-m)\\
            &= -c\alpha'\Sigma'_{ct}\,(\Sigma+\alpha'\Sigma'_{ct})^{-1}(m_t-m).
        \end{align*}
        This is the ODE~\cref{eq:lmm:compare:cbs:equiv:dot-m'} accelerated with a factor $c$. We conclude that $m_t = m'_{ct}$.
    \end{proof}
    For non-Gaussian distributions, this configuration of localized CBS likely performs quite poorly due to the large $\kappa$ value, which would result in excessive smoothing of the potential function $V$.

    As it concerns only Gaussian distributions, \cref{lmm:compare:cbs:equiv} is mainly of theoretical interest. A more general link between localized CBS and CBS is the $\kappa\to\infty$ limit from \cref{sec:cbs-approx}.

    \subsection{Polarized consensus-based sampling} \label{sec:compare:pcbs}
    A recently proposed modification of CBS, called polarized CBS~\cite{bungertPolarizedConsensusbasedDynamics2024}, makes the weight function dependent on the position~$U_t$ through a \emph{polarization kernel} $k(\cdot, \cdot)$. This results in the dynamics
    \begin{equation} \label{eq:compare:pcbs:pcbs}
        \dd U_t = -(U_t - \tilde\mu_{\alpha,k}(\rho_t; U_t))\,\dd t+\sqrt{2(\alpha+1)\,\tilde{\mathcal C}_{\alpha,k}(\rho_t; U_t)}\,\dd W_t,
    \end{equation}
    where we use the weights $\tilde w_{\alpha,k}(v; u) = k(v, u)\,\widehat\pi(v)^\alpha$ to define a polarized mean and covariance%
    \begin{subequations}
    \begin{align}
        \tilde\mu_{\alpha,k}(\rho; u) &\coloneqq \frac{\int v\tilde w_{\alpha,k}(v; u)\,\mathrm d\rho(v)}{\int \tilde w_{\alpha,k}(v; u)\,\mathrm d\rho(v)},\\
        \tilde{\mathcal C}_{\alpha,k}(\rho; u) &\coloneqq \frac{\int (v - \tilde\mu_{\alpha,k}(\rho; u))\otimes(v - \tilde\mu_{\alpha,k}(\rho; u))\,\tilde w_{\alpha,k}(v;u)\,\mathrm d\rho(v)}{\int \tilde w_{\alpha,k}(v; u)\,\mathrm d\rho(v)}. \label{eq:compare:pcbs:stat-pol:C}
    \end{align}
    \end{subequations}
    The main kernel used in~\cite{bungertPolarizedConsensusbasedDynamics2024} is Gaussian: for positive definite matrix $D$ and scalar $\lambda$,\hspace{-.1cm}
    \begin{equation} \label{eq:compare:pcbs:kernel-gauss}
        k(v, u) = \exp\Bigl(-(2\lambda)^{-1}\norm{v - u}_D^2\Bigr).
    \end{equation}
    This choice of $k$ introduces a localization effect: the polarized mean $\tilde\mu_{\alpha, k}(\rho_t; U_t)$---the ``target'' of the drift in~\cref{eq:compare:pcbs:pcbs}---tends to produce a value that is close to $U$, as well as having a large density in the posterior $\pi\propto\widehat\pi$. When $\pi$ is Gaussian and~\cref{eq:compare:pcbs:kernel-gauss} is used for the kernel, \mbox{\cite[Proposition~1]{bungertPolarizedConsensusbasedDynamics2024}} proves that $\pi$ is a stationary distribution of~\cref{eq:compare:pcbs:pcbs} with~\cref{eq:compare:pcbs:kernel-gauss}. More generally, a localized kernel attempts to lessen the Gaussian assumption present in CBS in order to allow multimodal sampling.

    \Cref{eq:compare:pcbs:pcbs} is very reminiscent of the localized CBS dynamics~\cref{eq:lcbs:loc-prec:lcbs}. We now compare the two, highlighting three main differences. \begin{enumerate}[label={(\roman*)}]
        \item Localized CBS includes the correction term $\nabla_u\cdot\mathbf P_{U,t}$. As $\beta\rightarrow\infty$ and~${\kappa\rightarrow0}$, we expect convergence to the interacting Langevin dynamics, under which even non-Gaussian and multimodal $\pi$ are invariant. In polarized CBS, we do not expect a similar convergence.
        \item By including the preconditioner $\mathbf P_{U,t}$ in the distance weighting for the localized mean, localized CBS has affine-invariant dynamics when $\mathbf P_{U,t}$ is a weighted or unweighted covariance. The kernels proposed for polarized CBS in~\cite{bungertPolarizedConsensusbasedDynamics2024} are not affine-invariant as they weight the distance between two particles by a fixed matrix.
        
        One could modify the kernel in polarized CBS to make it affine-invariant, e.g., by replacing $D$ by $\mathcal C(\rho_t)$. While interesting, this would not change the other benefits of localized CBS.

        \item Using $\widehat\pi$-based and distance-based weights in the (relatively cheap) mean computation is crucial in both methods, since that drives evolution towards high-probability regions and allows multimodal sampling. On the other hand, we find that the benefits of using weights in the (relatively expensive) covariance preconditioner are significantly less clear. Yet, this can be costly: distance-weighted statistics need to be computed for each of the $J$ particles, instead of once, globally. Polarized CBS uses the same weighting for both the mean and the covariance, while localized CBS decouples these two elements and allows to use a cheap, unweighted covariance as a preconditioner. This can make localized CBS the~\mbox{computationally}~cheaper~method in~higher-dimensional~problems.
    \end{enumerate}

    \subsection{Localized ALDI} \label{sec:compare:laldi}
    The gradient-free McKean--Vlasov dynamics underlying the closely related EKS and ALDI algorithms~\cite{garbuno-inigoInteractingLangevinDiffusions2020,garbuno-inigoAffineInvariantInteracting2020b} are described by
    \begin{equation} \label{eq:compare:laldi:aldi}
        \dd U_t = -\mathcal C^G(\rho_t)\Gamma^{-1}\,(G(U_t) - y)\,\dd t + \mathcal C(\rho_t)\Gamma_0^{-1}\,(U_t - u_0)\,\dd t + \sqrt{2\mathcal C(\rho_t)}\,\dd W_t,
    \end{equation}
    where
    \begin{equation}
        \mathcal\mu^G(\rho) \coloneqq \medint\int G(v)\,\mathrm d\rho(v) \quad \text{and} \quad \mathcal C^G(\rho) \coloneqq \medint\int (v - \mu(\rho))\otimes(G(v) - \mu^G(\rho))\,\mathrm d\rho(v).
    \end{equation}
    \Cref{eq:compare:laldi:aldi} approaches the sampling problem~\cref{eq:intro:intro:dist} explicitly for a Bayesian inverse problem~\cref{eq:intro:bip:bip}, estimating the parameters of a model $G$. It assumes that ${\pi_\mathrm{prior} = \mathcal N(u_0, \Gamma_0)}$ and~${\pi_\mathrm{noise} = \mathcal N(0, \Gamma)}$ are Gaussians. When $G$ is linear,~\cref{eq:compare:laldi:aldi} is equivalent to~\cref{eq:ips:lang:lang-int} and, hence, has $\pi$ as an invariant distribution~\cite{garbuno-inigoInteractingLangevinDiffusions2020}. For nonlinear $G$, this is generally not the case.

    In~\cite{reichFokkerPlanckParticleSystems2021a}, localization with a scalar $\lambda$ and positive definite matrix $D$ is added to~\cref{eq:compare:laldi:aldi}, together with a correction term similar to the one in~\cref{eq:ips:lang:lang-C}. This can be seen as Langevin diffusions where the gradients are approximated using the method in~\cite{wackerPerspectivesLocallyWeighted2024a}. The resulting dynamics are
    \begin{equation}
    \begin{aligned}
        \dd U_t &= -\tilde{\mathcal C}_{\lambda D}^G(\rho_t; U_t)\Gamma^{-1}\,(G(U_t) - y)\,\dd t - \tilde{\mathcal C}_{\lambda D}(\rho_t; U_t)\Gamma_0^{-1}\,(U_t - u_0)\,\dd t\\
        &\qquad + \nabla_u \cdot \tilde{\mathcal C}_{\lambda D}(\rho_t; U_t) + \sqrt{2\tilde{\mathcal C}_{\lambda D}(\rho_t; U_t)}\,\dd W_t,
    \end{aligned}
    \end{equation}
    where we use $\tilde w_{\lambda D}(v; u) = \exp(-(2\lambda)^{-1}\norm{v - u}_D^2)$ to define
    \begin{subequations}
    \begin{align}
        \tilde\mu_{\lambda D}(\rho; u) &\coloneqq \frac{\int v\tilde w_{\lambda D}(v; u)\,\mathrm d\rho(v)}{\int \tilde w_{\lambda D}(v; u)\,\mathrm d\rho(v)},\\
        \tilde{\mathcal C}_{\lambda D}(\rho; u) &\coloneqq \frac{\int (v - \tilde\mu_{\lambda D}(\rho; u))\otimes(v - \tilde\mu_{\lambda D}(\rho; u))\,\tilde w_{\lambda D}(v;u)\,\mathrm d\rho(v)}{\int \tilde w_{\lambda D}(v; u)\,\mathrm d\rho(v)}.
    \end{align}
    \end{subequations}
    The definition of $\tilde{\mathcal C}_{\lambda D}^G$ is analogous. Localization in ALDI (which, we stress again, differs from the concept of localization discussed in \cref{rem:intro:lit:loc}) improves performance for nonlinear $G$. However, we point out two potential comparative advantages of the localized CBS dynamics. \begin{enumerate}[label={(\roman*)}]
        \item Similarly to polarized CBS, when the matrix $D$ in localized ALDI is fixed, the dynamics are not affine-invariant. In~\cite{reichFokkerPlanckParticleSystems2021a}, affine-invariance is proven if $D = \mathcal C(\rho_0)$, such that $D$ is also subject to any affine transformation. This can be considered a slightly weaker form of affine-invariance: in practice, the scaling of the parameters is often not fully known and $\rho_0$ cannot be chosen with optimal scaling. After all, if the scaling was known exactly, we could rescale the problem manually and would not need affine-invariant samplers. Whereas the effect of the initial distribution may diminish over time in localized CBS (see \cref{sec:num:multi} and \cref{fig:num:multi:ai} in particular), choosing ${D=\mathcal C(\rho_0)}$ in localized ALDI ensures that an incorrectly scaled~initial~distribution~persists.
        
        Analogously to \cref{sec:compare:pcbs} for polarized CBS, it is conceivable that replacing $D$ by $\mathcal C(\rho_t)$ could improve the localized ALDI algorithm by making it affine-invariant.

        \item As in polarized CBS, localized ALDI uses weighted covariance matrices, which can be more expensive to compute than unweighted ones in localized CBS.
    \end{enumerate}

    \subsection{Other methods} \label{sec:compare:other}
    The localized CBS dynamics~\cref{eq:lcbs:loc-prec:lcbs} arise from a specific approximation of the gradient in preconditioned Langevin dynamics, as detailed in \cref{sec:cbs-approx}. Several other gradient-free Langevin-type variants have been proposed in the literature. Besides localized ALDI, discussed in \cref{sec:compare:laldi}, notable examples include the following methods. \begin{enumerate}[label={(\roman*)}]
        \item The method of~\cite{dingConstrainedEnsembleLangevin2022} approximates gradients using only particles within a fixed neighborhood of the target particle. The authors of~\cite{dingConstrainedEnsembleLangevin2022} report that this approach can become unstable because isolated particles may experience unrealistically large approximate gradients. To mitigate this, they propose evaluating the true gradient at such particles while the remaining ones use the ensemble-based approximation. Although conceptually interesting—and potentially adaptable to localized CBS—this algorithm is not fully gradient-free and is therefore applicable in a more limited setting than localized CBS.
        \item The multiscale sampler of~\cite{pavliotisDerivativeFreeBayesianInversion2022} introduces two types of particles: a main (\emph{macro}) particle that represent the distribution and a local swarm of auxiliary (\emph{micro}) particles used to estimate gradients. This construction can handle non-Gaussian targets and may scale favorably with problem dimension because the particles stay tightly clustered. However, only the macro particle contributes to sampling; the computational effort invested in the auxiliary particles serves solely for gradient estimation. Moreover, since all particles in the swarm remain close, the method can explore only one mode at a time, which leads to metastability in multimodal problems. In contrast, every particle in localized CBS contributes to the sample ensemble, and the method can explore multiple modes simultaneously.
        \item There exist many Langevin-type samplers using the proximal operator (e.g.,~\cite{doi:10.1137/16M1108340,leeStructuredLogconcaveSampling2021,pereyraProximalMarkovChain2016,titsiasAuxiliaryGradientBasedSampling2018}), often with the goal of sampling from non-differentiable distributions. These are not ensemble methods and they are neither affine-invariant nor gradient-free. However, they typically scale very well to high dimensions, which is useful in applications such as Bayesian imaging.
        
        Related to these algorithms,~\cite{doi:10.1137/21M1406349} uses \emph{Tweedie's identity}
        \begin{equation}
            \nabla \widetilde V^\kappa(u) = \kappa^{-1}(u - \widetilde D^\kappa(u))
        \end{equation}
        with
        \begin{equation}
            \widetilde V^\kappa(u) = -\log\left(\int \exp(-\norm{u-v}_2^2/(2\kappa))\exp(-V(v))\,\mathrm dv\right)
        \end{equation}
        and the \emph{denoiser}
        \begin{equation} \label{eq:compare:other:tweedie-D}
            \widetilde D^\kappa(u) = \frac{\int v \exp(-\norm{u-v}_2^2/(2\kappa))\exp(-V(v))\,\mathrm dv}{\int \exp(-\norm{u-v}_2^2/(2\kappa))\exp(-V(v))\,\mathrm dv}
        \end{equation}
        to replace the gradient of a nondifferentiable part of the target potential by $\kappa^{-1}(u - \widetilde D^\kappa(u))$. Tweedie's identity offers a second two-step approximation scheme to retrieve~\cref{eq:cbs-approx:approx-2} from interacting Langevin: approximate $V$ by $\widetilde V^\kappa$ (with certain adaptations for weighting the exponential by $\beta$ and the distance by $\mathbf P_t$) to apply Tweedie's identity, and then approximate the integrals over the Lebesgue measure in~\cref{eq:compare:other:tweedie-D} by integrals over $\rho_t$.
    \end{enumerate}

\section{Discretizations and computational considerations} \label{sec:comput}
    We now bridge the gap between the theoretical localized CBS dynamics~\cref{eq:lcbs:loc-prec:lcbs} and practical algorithms to sample from~\cref{eq:intro:intro:dist}.

    \paragraph{Particle discretization.} The dynamics~\cref{eq:lcbs:loc-prec:lcbs} of localized CBS are defined as a McKean--Vlasov SDE: the law of $U_t$ factors into its evolution. Of course, this law is not available in practice and~\cref{eq:lcbs:loc-prec:lcbs} does not yet constitute an algorithm. We now introduce particle approximations, i.e., schemes where $J$ approximate instances of~\cref{eq:lcbs:loc-prec:lcbs} are run in parallel. They use their own empirical distribution instead of their unknown exact law to drive their evolution. We denote the $i$th particle by $U_t^i$ and the entire ensemble by the $d\times J$ matrix $\bm U_t = (U_t^1, U_t^2, \ldots, U_t^J)$.

    When $\mathbf P(\rho_t; U_t)$ is an unweighted or weighted covariance, we replace it by a particle approximation: a (weighted) sample covariance of all particles defined as
    \begin{equation}
        \widehat{\mathcal C}(\bm U) \coloneqq \frac1J\sum\nolimits_{j=1}^J (U^j - \widehat\mu(\bm U))\otimes(U^j - \widehat\mu(\bm U)) \qquad \text{with} \qquad \widehat\mu(\bm U) \coloneqq \frac1J\sum\nolimits_{j=1}^J U^j
    \end{equation}
    for the unweighted covariance and, for the weighted covariance,
    \begin{equation}
        \widehat{\mathcal C}_{\alpha, \lambda\widehat{\mathcal C}(\bm U)}(\bm U; U^i) \coloneqq\sum\nolimits_{j=1}^J \omega_{\alpha,\lambda\widehat{\mathcal C}(\bm U)}^{ij}\;(U^j - \widehat\mu_{\alpha, \lambda\widehat{\mathcal C}(\bm U)}(\bm U; U^i))\otimes(U^j - \widehat\mu_{\alpha, \lambda\widehat{\mathcal C}(\bm U)}(\bm U; U^i))
    \end{equation}
    with
    \begin{equation} \label{eq:comput:particle:mu}
        \widehat\mu_{\alpha, A}(\bm U; U^i) \coloneqq \sum\nolimits_{j=1}^J\omega_{\alpha, A}^{ij} U^j,
    \end{equation}
    where $\omega_{\alpha, A}^{ij} = w_{\alpha, A}^{ij} / \sum_{k = 1}^J w_{\alpha, A}^{ik}$ with $w_{\alpha, A}^{ij} = w_{\alpha, A}(U_j; U_i)$. In general, we denote the particle approximation to $\mathbf P(\rho; U^i)$ by $\widehat{\mathbf P}(\bm U; U^i)$. As in the mean-field setting of \cref{sec:lcbs}, we assume that this preconditioner is invertible. Notably, this requires that the $J$ particles span the entire space $\bR^{d}$. Particle-based methods such as ALDI have been used with $J \ll d$ to sample in a linear subspace~\cite{garbuno-inigoAffineInvariantInteracting2020b}; we leave the application of localized CBS to those settings to~future~work.

    We also replace the weighted mean in~\cref{eq:lcbs:loc-prec:lcbs} by a particle approximation. The most straightforward option is a weighted sample mean of all particles, given in~\cref{eq:comput:particle:mu}. Later in this section, we will also explore a random-batch modification of this scheme.

    \begin{remark}[Scaling of the particle discretization to higher dimensions] \label{rem:comput:scale}
        The performance of localized CBS in high dimensions has not been theoretically analyzed. Since the method relies on local interactions between particles distributed throughout parameter space for gradient estimations, increasing dimensionality may increase the typical distance between particles and hence negatively affect performance. Clarifying this requires dedicated study.
    \end{remark}

    \paragraph{Time discretization.} After particle discretization, we are left with a system of interacting SDEs. Since time discretization is not a main point of focus of this paper, we use the standard Euler--Maruyama discretization with $N$ timesteps of length $\dt$ in our numerical tests, although others are possible\footnotemark{}. We denote the $i$th particle at timestep $n$ by $U_n^i$ and the ensemble by $\bm U_n$.

    \footnotetext{Classical CBS uses a non-standard timestepping scheme, which helps them prove properties in the time-discrete setting. Polarized CBS adopts this same method. While EKS uses a linearly implicit split-step time discretization, both ALDI and localized ALDI use standard Euler--Maruyama.}

    \paragraph{Random-batch interaction.} For the weighted mean in the drift term of localized CBS, we introduce a particle approximation that forbids self-to-self interaction (which would have large weights when particles are sparse) and randomly subsamples particles with a factor $\nu\in(0, 1]$. Consider
    \begin{equation} \label{eq:particle:mu-batch}
        \widehat\mu_{\beta,\kappa\widehat{\mathbf P}}^{(\nu,i)}(\bm U_n; U_n^i) \coloneqq \frac{\sum_{j=1}^J \iota(i,j, n) \,U_n^j\,w_{\alpha,\lambda\widehat{\mathbf P}}^{ij}}{\sum_{j=1}^J \iota(i,j, n)\,w_{\alpha,\lambda\widehat{\mathbf P}}^{ij}},
    \end{equation}
    where $\iota$ selects whether to count the contribution of particle $j$:
    \begin{equation}
        \iota(i,j, n) \coloneqq \begin{cases}
            0, & \text{if $i=j$ or $\theta_n^{i,j} > \nu$, with $\theta_n^{i,j}\sim \mathcal{U}(0,1)$ i.i.d.},\\
            1, & \text{otherwise}.
        \end{cases}
    \end{equation}
    Here we write $\widehat{\mathbf P} \coloneqq\widehat{\mathbf P}(\bm U_n; U_n^i)$ for notational convenience. This idea is related to the random-batch method of~\cite{jinRandomBatchMethods2020a}, where interacting-particle systems are divided into small, random batches for a reduction in computational cost. We will choose relatively large values of $\nu$ (e.g., $\nu=0.5$), so the cost difference is moderate; instead, we are motivated by our observation that random-batch interaction can improve sampling performance for higher-dimensional, multimodal problems. This is illustrated in \cref{sec:num}.

    \paragraph{Square-root-free formulation.} Similarly to what is proposed in~\cite{garbuno-inigoAffineInvariantInteracting2020b}, it is possible to replace the symmetric positive definite square root $\sqrt{\widehat{\mathbf P}}\in\bR^{d\times d}$ by a (rectangular) Cholesky factor~${\widehat{\mathbf P}^{1/2}\in\bR^{d\times n}}$, for any $n$, with the property that
    \begin{equation}
        \widehat{\mathbf P} = \widehat{\mathbf P}^{1/2}\,(\widehat{\mathbf P}^{1/2})^T.
    \end{equation}
    By using $\widehat{\mathbf P}^{1/2}$ and simultaneously replacing the $d$-dimensional Brownian motion $W_t$ by an $n$-dimensional one, the Fokker--Planck equation remains the same. Henceforth, let us define~${\bar U\coloneqq \widehat\mu(\bm U)}$,~$\bar U^{(i)}\coloneqq \widehat\mu_{\alpha, \lambda\widehat{\mathcal C}(\bm U)}(\bm U; U^i)$, $w^{ij}\coloneqq w^{ij}_{\alpha,\lambda\widehat{\mathcal C}(\bm U)}$, and $\omega^{ij}\coloneqq \omega^{ij}_{\alpha,\lambda\widehat{\mathcal C}(\bm U)}$ for conciseness. The unweighted and weighted covariance have factors
    \begin{align}
        \widehat{\mathcal C}^{1/2}(\bm U) &= \frac1{\sqrt J}\begin{bmatrix}
            (U^1 - \bar U) & \ldots & (U^J - \bar U)
        \end{bmatrix}\\
        \widehat{\mathcal C}_{\alpha,\lambda\widehat{\mathcal C}(\bm U)}^{1/2}(\bm U; U^i) &= \begin{bmatrix}
            \sqrt{\omega^{i1}}\,(U^1 - \bar U^{(i)}) & \ldots & \sqrt{\omega^{iJ}}\,(U^J - \bar U^{(i)})
        \end{bmatrix},
    \end{align}
    respectively. These quantities are readily available and avoid computing the square root of $\widehat{\mathbf P}$. This should offer computational advantages unless $J\gg d$.

    \paragraph{Covariance-weighted norms.} Within the weight function~\cref{eq:cbs-approx:w-and-mu-w}, localized CBS uses squared norms~${\norm{U^i - U^j}_{\widehat{\mathbf P}}^2= (U^i - U^j)^T\widehat{\mathbf P}^{-1}(U^i - U^j)}$ weighted by the preconditioner. This quantity can be evaluated as follows: precompute
    \begin{equation*}
        M \coloneqq \bm U^T\widehat{\mathbf P}^{-1}\bm U
    \end{equation*}
    and use
    \begin{equation*}
        \norm[\big]{U^i - U^j}_{\widehat{\mathbf P}}^2 = M_{ii} + M_{jj} - 2M_{ij}.
    \end{equation*}
    If $\widehat{\mathbf P}$ depends on $i$, then so does $M$.

    \paragraph{Correction term.} The discretized dynamics contain the correction term~${\nabla_{u^i}\cdot\widehat{\mathbf P}}$, where we write~${\widehat{\mathbf P} \coloneqq \widehat{\mathbf P}(\bm U_n; U_n^i)}$. We now study this term for the preconditioners we consider in this article. When $\widehat{\mathbf P} = K$, clearly $\nabla_{u^i}\cdot\widehat{\mathbf P} = 0$; when $\widehat{\mathbf P} = \widehat{\mathcal C}_{\alpha, \lambda\widehat{\mathcal C}(\bm U)}(\bm U; U)$, the correction term is given in \cref{lmm:comput:corr}. The preconditioners $\widehat{\mathcal C}_\alpha(\bm U)$ and $\widehat{\mathcal C}(\bm U)$ correspond to \cref{lmm:comput:corr} with~${\lambda = \infty}$ and, in the latter case, $\alpha=0$ as well. Note that the correction term contains a derivative of $\widehat\pi$ when $\alpha\ne0$, which is problematic in our otherwise derivative-free method. In this paper we always choose $\alpha=0$. If situations are identified where $\alpha\ne0$ is preferred, one could neglect the $\widehat\pi$ derivatives (see \cref{rem:comput:ignore-corr-terms}) or construct a particle approximation (e.g., based on the proximal operator such as in \cref{sec:cbs-approx} or on the technique in~\cite{schillingsEnsembleBasedGradientInference2023a}).~This~is~left~for~future~work.

    \begin{lemma} \label{lmm:comput:corr}
        When $\widehat{\mathbf P} = \widehat{\mathcal C}_{\alpha, \lambda\widehat{\mathcal C}(\bm U)}(\bm U; U^i)$, it holds that
        \begin{equation} \label{eq:lmm:comput:corr:expr}
        \begin{aligned}
            &\nabla_{u^i}\cdot\widehat{\mathbf P} = \omega^{ii}\,(d+1)U_c^{(i),i} + \frac1\lambda \bm U_c^{(i)} \left(\omega^i \odot \mathrm{diag}\bigl((\bm U_c^{(i)})^T \widehat{\mathcal C}^{-1} \bm U_c^{(i)}\bigr)\right)\\
            & \quad - \frac1{\lambda J}\widehat{\mathbf P} \widehat{\mathcal C}^{-1}\left(U_c^{(i),i}(U_c^{(i),i})^T+\widehat{\mathbf P}\right)\widehat{\mathcal C}^{-1} U_c^i\\
            & \quad + \frac1{\lambda J}\bm U_c^{(i)}\,\bigl(\omega^i \odot \mathrm{diag}\bigl((\bm U_c^{(i)})^T \widehat{\mathcal C}^{-1} (\bm U - U^i\bm 1^T)\bigr) \odot \bigl((U_c^i)^T \widehat{\mathcal C}^{-1} (\bm U - U^i\bm 1^T)\bigr)^T\bigr)\hspace{-.3cm}\\
            & \quad + \alpha\omega^{ii}\left(U_c^{(i),i}(U_c^{(i),i})^T - \widehat{\mathbf P}\right)\frac{\nabla_{u^i}\widehat\pi(U^i)}{\widehat\pi(U^i)},
        \end{aligned}
        \end{equation}
        where we denote by $\odot$ the Hadamard (elementwise) product. Furthermore, $\bm U_c \coloneqq \bm U - \bar U\bm 1^T$ and ${\bm U_c^{(i)} \coloneqq \bm (\bm U - \bar U^{(i)}\bm 1^T)}$. We write $U_c^i$ for the $i$th column of~$\bm U_c$, $U_c^{(i),j}$ for the $j$th column of~$\bm U_c^{(i)}$, $\widehat{\mathcal C}$ as a shorthand for $\widehat{\mathcal C}(\bm U)$, and $\omega^i = (\omega^{i1}, \ldots, \omega^{iJ})^T$.
    \end{lemma}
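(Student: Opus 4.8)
The plan is to differentiate the weighted covariance $\widehat{\mathbf C} = \sum_{j=1}^J \omega^{ij}\,U_c^{(i),j}(U_c^{(i),j})^T$ directly and then contract over the summation index to form the divergence, $(\nabla_{u^i}\cdot\widehat{\mathbf C})_k = \sum_l \partial_{u^i_l}\widehat{\mathbf C}_{kl}$; since $\widehat{\mathbf C}$ is symmetric, the row-versus-column ambiguity in the divergence of a matrix field is immaterial. The first task is to catalog every channel through which $u^i := U^i$ enters $\widehat{\mathbf C}$: \textbf{(a)} explicitly, through the centered vector $U_c^{(i),i} = U^i - \bar U^{(i)}$ in the $j=i$ summand; \textbf{(b)} through the normalized weights $\omega^{ij} = w^{ij}/\sum_k w^{ik}$, whose log-derivative has a distance-weight part (for $j\ne i$) and a $\widehat\pi(U^i)^\alpha$ part (only for $j=i$); and \textbf{(c)} through the metric $\widehat{\mathcal C}(\bm U)$ inside the distance norm of the weight, which depends on $U^i$ because $U^i$ is one of the particles defining the empirical covariance. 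Each channel produces one group of terms in \cref{eq:lmm:comput:corr:expr}.

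The key lever is the weighted-covariance centering identity: perturbing the weights $\omega^{ij}$ (which always sum to one) while holding the particles fixed gives $d\widehat{\mathbf C} = \sum_j d\omega^{ij}\,U_c^{(i),j}(U_c^{(i),j})^T$, the induced variation of $\bar U^{(i)}$ cancelling against $\sum_j\omega^{ij}U_c^{(i),j} = 0$. Combined with the quotient rule $\partial_{u^i_l}\omega^{ij} = \omega^{ij}\bigl(\partial_{u^i_l}\log w^{ij} - \sum_k\omega^{ik}\partial_{u^i_l}\log w^{ik}\bigr)$ and contraction over $l$, the weight channels collapse to $\sum_j\omega^{ij}\,U_c^{(i),j}\bigl[(g^{ij} - \bar g^i)\cdot U_c^{(i),j}\bigr]$, where $g^{ij} = \nabla_{u^i}\log w^{ij}$ splits into a channel-\textbf{(b)} piece (the explicit $u^i$ in the distance weight, plus $\widehat\pi(U^i)^\alpha$) and a channel-\textbf{(c)} piece (the metric), and $\bar g^i = \sum_k\omega^{ik}g^{ik}$. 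The subtracted average $\bar g^i$ is precisely what produces the $-\widehat{\mathbf C}\,\bar g^i$ contractions, since $\sum_j\omega^{ij}U_c^{(i),j}(U_c^{(i),j})^T = \widehat{\mathbf C}$; this is the origin of the $-\widehat{\mathbf C}(\cdots)$ term in line two and of the $-\widehat{\mathbf C}$ inside the $\alpha$-term. Channel \textbf{(a)}, handled with the weights frozen, contributes $\omega^{ii}(d+1)\,U_c^{(i),i}$: the explicit dependence of $U_c^{(i),i}(U_c^{(i),i})^T$ yields $d$ from $\nabla_{u^i}\cdot(U^i - \bar U^{(i)}) = d$ plus $1$ from the other outer-product leg, while the $\omega^{ii}$-driven shift of $\bar U^{(i)}$ again drops out by centering.

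For channel \textbf{(b)} the relevant gradient is $\tfrac1\lambda\widehat{\mathcal C}^{-1}(U^j - U^i)$ for $j\ne i$ and $\alpha\,\nabla_{u^i}\widehat\pi(U^i)/\widehat\pi(U^i)$ for $j=i$ (the distance gradient vanishes at $j=i$). The distance part assembles into $\tfrac1\lambda\bm U_c^{(i)}\bigl(\omega^i\odot\mathrm{diag}((\bm U_c^{(i)})^T\widehat{\mathcal C}^{-1}\bm U_c^{(i)})\bigr)$ once the scalar products $(g^{ij}-\bar g^i)\cdot U_c^{(i),j}$ are recognized as the diagonal of $(\bm U_c^{(i)})^T\widehat{\mathcal C}^{-1}\bm U_c^{(i)}$, while the $\widehat\pi$ part, supported only at $j=i$, gives $\alpha\omega^{ii}\bigl(U_c^{(i),i}(U_c^{(i),i})^T - \widehat{\mathbf C}\bigr)\nabla_{u^i}\widehat\pi(U^i)/\widehat\pi(U^i)$. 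For channel \textbf{(c)} I use the rank-two form $\partial_{u^i_l}\widehat{\mathcal C} = \tfrac1J\bigl(e_l(U_c^i)^T + U_c^i e_l^T\bigr)$ --- here the $-1/J$ mean-shift pieces vanish because $\sum_j(U^j - \bar U) = 0$ --- together with $\partial_{u^i_l}\widehat{\mathcal C}^{-1} = -\widehat{\mathcal C}^{-1}(\partial_{u^i_l}\widehat{\mathcal C})\widehat{\mathcal C}^{-1}$, and propagate this through the two copies of $\widehat{\mathcal C}^{-1}(U^j - U^i)$ in the distance norm; this produces the two $\tfrac1{\lambda J}$ terms in lines two and three, with the uncentered differences $\bm U - U^i\bm 1^T$ appearing because the metric derivative acts on the raw distance vectors $U^j - U^i$.

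The main obstacle is organizational rather than conceptual: the three channels \textbf{(a)}--\textbf{(c)} must be kept from contaminating one another, and the weighted-mean subtractions from the quotient rule must be tracked consistently so that the $-\widehat{\mathbf C}$ pieces land in the correct terms and the various centerings --- $U^j - \bar U^{(i)}$ versus $U^j - U^i$ versus $U^i - \bar U$ --- are never conflated. The two identities that make this tractable are the centering cancellation $\sum_j\omega^{ij}U_c^{(i),j} = 0$ (used repeatedly to discard $\bar U^{(i)}$- and $\omega^{ii}$-drift) and the rank-two form of $\partial_{u^i_l}\widehat{\mathcal C}$; with these in place, the remainder is a bookkeeping exercise of expanding the products, contracting over $l$, and re-collecting the sums into the Hadamard and $\mathrm{diag}$ notation of \cref{eq:lmm:comput:corr:expr}. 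The preconditioners $\widehat{\mathcal C}_\alpha(\bm U)$ and $\widehat{\mathcal C}(\bm U)$ then follow by sending $\lambda\to\infty$ (killing channel \textbf{(c)} and the $\tfrac1\lambda$ term) and, for the latter, additionally taking $\alpha=0$.
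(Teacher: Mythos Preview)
Your plan is correct and follows essentially the same route as the paper. The paper's proof starts by quoting \cite[Lemma~3.11]{reichFokkerPlanckParticleSystems2021a} for the general identity
\[
\nabla_{u^i}\cdot\widehat{\mathbf C} = \omega^{ii}(d+1)\,U_c^{(i),i} + \sum_{j}\bigl(U_c^{(i),j}(U_c^{(i),j})^T - \bar U^{(i)}(\bar U^{(i)})^T\bigr)\nabla_{u^i}\omega^{ij},
\]
then computes $\nabla_{u^i}\omega^{ij}$ for the specific weight function and splits the result into pieces labeled $A,B_1,B_2,B_3,C_1,C_2,C_3,D$ (distance, metric, and $\widehat\pi$ contributions), showing the $C$-terms cancel. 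Your channels \textbf{(a)}--\textbf{(c)} reproduce exactly this decomposition: channel \textbf{(a)} is $A$, channel \textbf{(b)} gives $B_1$ and $D$, channel \textbf{(c)} gives $B_2$ and $B_3$. The two minor differences are that you derive the starting formula yourself rather than citing it, and that your up-front use of the centering identity $\sum_j\omega^{ij}U_c^{(i),j}=0$ (and $\sum_j\nabla_{u^i}\omega^{ij}=0$) eliminates the $-\bar U^{(i)}(\bar U^{(i)})^T$ piece immediately, whereas the paper carries it along as $C_1,C_2,C_3$ and cancels it at the end. Your observation that the distance-gradient part combines with its weighted average to give $\tfrac1\lambda\widehat{\mathcal C}^{-1}U_c^{(i),j}$ directly is the same simplification the paper performs inside its expression for $\nabla_{u^i}\omega^{ij}$.
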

    \begin{proof}
        This is proven in \cref{sec:apdx}.
    \end{proof}
    \begin{remark} \label{rem:comput:ignore-corr-terms}
        All terms in~\cref{eq:lmm:comput:corr:expr} except the second go down in magnitude as the number of particles $J$ increases (through either $1/J$ or $\omega^{ii}=w^{ii}/\sum_jw^{ij}$, which depends on $J$ due to the normalization). It may be computationally interesting to omit some of these terms in practice when $J$ is large.
    \end{remark}

    \paragraph{Localized CBS algorithm.} Combining the points discussed in this section leads to the localized CBS algorithm
    \begin{equation} \label{eq:comput:lcbs-alg}
        U_{n+1}^i = U_n^i + \Bigl[-\frac\gamma\kappa\Bigl(U_n^i - \widehat\mu_{\beta,\kappa\widehat{\mathbf P}}^{(\nu,i)}(\bm U_n; U_n^i)\Bigr) + \nabla_{u^i}\cdot\widehat{\mathbf P}\Bigr]\dt + \sqrt{2\dt}\,\widehat{\mathbf P}^{1/2}\xi_n^i,
    \end{equation}
    with $\xi_n^i\sim\mathcal N(0, I)$ i.i.d.\ random variables. Here, $(\beta, \kappa)$ are free parameters and $\gamma$ is chosen based on them (e.g., through \cref{cor:lcbs:gauss:gamma}). The choice of preconditioner $\widehat{\mathbf P} \coloneqq \widehat{\mathbf P}(\bm U_n; U_n^i)$ is free; examples are a constant matrix, an unweighted covariance as in ALDI, a weighted covariance as in CBS, or a localized weighted covariance. The correction term $\nabla_{u^i}\cdot\widehat{\mathbf P}$ is detailed in \cref{lmm:comput:corr,rem:comput:ignore-corr-terms}; for many choices of $\widehat{\mathbf P}$, it is negligible or zero.

    \begin{remark}[Parallelizability]
        Earlier, we motivated particle methods such as localized CBS by their potential for parallelization. The algorithm~\cref{eq:comput:lcbs-alg} is indeed parallelizable across particles: $\{V(U_n^i)\}_{i=1}^J$ can be computed independently. To compute the interaction terms $\widehat\mu_{\beta,\kappa\widehat{\mathbf P}}^{(\nu,i)}$ and (potentially) $\widehat{\mathbf P}$, communication between particles is required. However, in practical applications, $V$ is typically an expensive model (e.g., a PDE solver) whose evaluations are the main bottleneck.
    \end{remark}

\section{Numerical results} \label{sec:num}
    This section applies the localized CBS algorithm to a variety of problems, illustrating its properties. We compare our method to several existing alternatives: CBS (to illustrate that it is not suited for non-Gaussian problems) on the one hand and polarized CBS and localized ALDI on the other (to compare our algorithm to the two most similar options). Other algorithms are often challenging to compare to in a fair way: they either use gradients, are less parallelizable, or produce fewer samples per evaluation of $V$.

    For both localized CBS and alternative methods, we will use the Euler--Maruyama time discretizations discussed in \cref{sec:comput}. Unless mentioned otherwise, localized CBS uses timestep $\dt=0.01$ and preconditioner $\mathcal C(\rho_t)$, an unweighted covariance. Except for \cref{fig:num:gaussian:fig}, we always use the $\gamma$ value implied by the preconditioner through \cref{cor:lcbs:gauss:gamma}. The default random-batch parameter is $\nu=1$, such that a particle interacts with all other particles. In addition, except in \cref{sec:num:tent}, we sample by running the specified algorithm $16$ times and combining the particle positions at all time steps within the final quarter of the time interval in each run. Samples are plotted with the \verb+density+ function from the Julia package \verb+StatsPlots+, which performs kernel density estimation (KDE) to smooth the samples into an approximate distribution. When the dimension~$d$ is larger than one, we plot the marginal distribution in the first dimension unless specified otherwise. The initial distribution for all sampling methods is a zero-centered Gaussian $\mathcal N(0, \Sigma_0)$; by default, ${\Sigma_0 = I/2}$.~Our~code~is~\mbox{located}~at 
    \begin{quote}
        \texttt{\url{https://gitlab.kuleuven.be/numa/public/paper-code-lcbs}}.
    \end{quote}

    \subsection{Gaussian distributions} \label{sec:num:gaussian}
    We apply localized CBS to a Gaussian distribution, to verify the values of $\gamma$ we obtained in \cref{sec:lcbs:gauss} and to assess the influence of an incorrect $\gamma$ on the sampled distribution. We consider the one-dimensional~potential~function\hspace{-.1cm}
    \begin{equation}
        V(u) = u^2
    \end{equation}
    and use $(\beta, \kappa) = (2.0, 0.01)$, $J=500$ particles, and $N=200$~timesteps. The top row of \cref{fig:num:gaussian:fig} shows\footnote{In the figures of this section, we abbreviate localized CBS, polarized CBS, and localized ALDI as \emph{LCBS}, \emph{PCBS}, and \emph{LALDI}, respectively.} the resulting samples with $\gamma\in\{0.5, \bar\gamma, 1.0, 1.5\}$, where ${\bar\gamma\approx0.677}$ is the value given by~\cref{eq:cor:lcbs:gauss:gamma:gamma-C}---since we use our default preconditioner $\mathcal C(\rho_t)$, this should ensure correct sampling for Gaussians. When $\gamma<\bar\gamma$, there is insufficient drift towards the weighted mean and the samples are too spread out. When $\gamma>\bar\gamma$, the opposite effect occurs. As predicted by the theory, the distribution is sampled correctly~with~$\gamma=\bar\gamma$.

    \begin{figure}
        \centering
        \includegraphics[width=.78\textwidth]{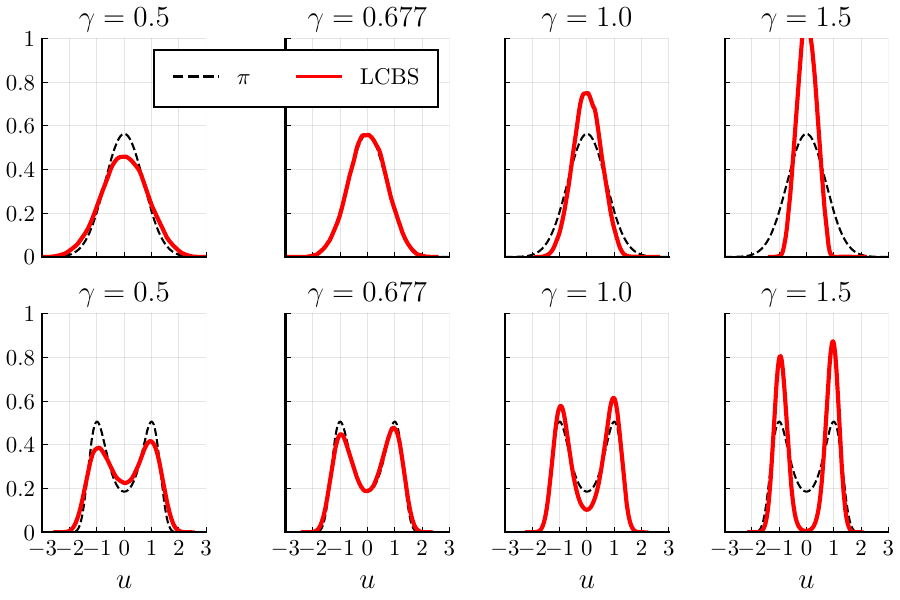}
        \caption{Localized CBS samples for a one-dimensional Gaussian target distribution (top row) and a one-dimensional multimodal target distribution (bottom row)}
        \label{fig:num:gaussian:fig}
    \end{figure}

    \subsection{Multimodal distributions} \label{sec:num:multi}
    One of the design objectives of localized CBS is to handle non-Gaussian, potentially even multimodal, distributions. The first multimodal distribution that we consider corresponds to the potential function
    \begin{equation}
        V(u) = \norm{u^2 - 1}_2^2,
    \end{equation}
    where $u^2$ applies the exponent elementwise. The resulting distribution $\pi$ is bimodal when the dimension $d$ is 1 and a tensor product of bimodal distributions otherwise.

    We first apply localized CBS with the same parameters from \cref{sec:num:gaussian} to this potential in dimension $d=1$, with the results shown in the bottom row of \cref{fig:num:gaussian:fig}. Even though $\pi$ is not sampled exactly, which is expected due to the small $\beta$ value, using the $\gamma=\bar\gamma$ that is correct for Gaussian distributions seems to be a good heuristic even in this non-Gaussian case.

    For this potential with ${d\in\{1,10\}}$, we compare localized CBS (with our default parameters, along with ${\beta=10}$ and~${\kappa=0.01}$ when $d=1$ or $\kappa=0.03$ when $d=10$) to CBS~(with~${\alpha=10}$ and~${\dt=0.01}$), polarized CBS (with~${\alpha=10}$,~${\lambda = 0.005}$ when~$d=1$ or~$\lambda=0.1$ when~$d=10$,~$D=I$, and~${\dt = 0.01}$), and localized ALDI (with $\lambda=0.02$ when~$d=1$ or $\lambda=0.4$ when $d=10$, $D=I$, and~$\dt=0.05$). These parameters were chosen to the best of our abilities to give good results for each method for the chosen values of $J=200$ particles and~${N=1000}$ timesteps. For localized CBS, we additionally compare $\nu=1$ to~${\nu=0.5}$. \Cref{fig:num:multi:comp} shows the resulting samples. Localized CBS accurately finds the distribution when~$d=1$; when~${d=10}$, we get a good approximation only with random-batch interaction turned on ($\nu=0.5$). Localized ALDI and polarized CBS each give relatively close approximations, but are unable to exactly fit $\pi$ when $d=10$. We also noted that for these two methods, the performance was significantly more sensitive to the choice of parameters than for localized CBS. Classical CBS finds the multiple modes only when~${d=10}$, and fails to approximate the density in between.\footnote{Note that the particles in CBS always follow a Gaussian distribution; in our experiment, CBS captures multiple modes as the results from $16$ runs (which may find different modes) are aggregated.}

    \begin{figure}
        \centering
        \includegraphics[width=.8\textwidth]{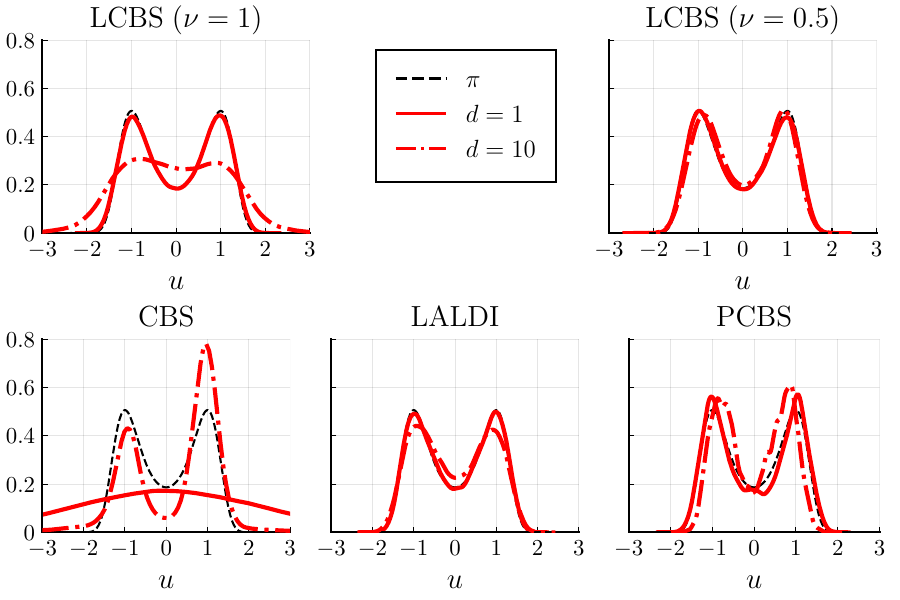}
        \caption{Marginalized samples for a 1- or 10-dimensional multimodal target distribution. Recall that we aggregate the samples from 16 runs, which explains the non-Gaussian CBS samples.}
        \label{fig:num:multi:comp}
    \end{figure}

    Secondly, we study affine-invariance of the three best performing algorithms: localized CBS, polarized CBS, and localized ALDI. To this end, consider
    \begin{equation} \label{eq:num:multi:V-ai}
        V(u) = \norm{(\sqrt\Lambda\,u)^2 - 1}_2^2,
    \end{equation}
    where $d=2$ and the square again applies elementwise. The matrix~${\Lambda = \mathrm{diag}(1, 10^4)}$ changes the scale of the second parameter. Suppose that we may not know this scaling before solving the problem. Without knowing the form of the potential~\cref{eq:num:multi:V-ai}, we assume that the relative scaling of the two parameters is given by a matrix $\tilde\Lambda$ that can either be the correct $\tilde\Lambda=\Lambda$ or the incorrect $\tilde\Lambda=I$. We use this assumption to choose an initial \mbox{covariance}~${\Sigma_0 = \tilde\Lambda^{-1}/2}$.~\mbox{In addition},
    \begin{enumerate}[label={(\roman*{})}]
        \item in polarized CBS and localized ALDI, we set $D=\tilde\Lambda^{-1}$;
        \item in our localized CBS algorithm, we simply use the preconditioner $\mathcal C(\rho_t)$, which does not need \emph{a priori} scaling information.
    \end{enumerate}
    We use $(\beta, \kappa) = (10, 0.03)$ for localized CBS, $\lambda = 0.02$ and $\dt = 0.05$ for localized ALDI, and~${\lambda = 0.005}$ for polarized CBS. \Cref{fig:num:multi:ai} shows the samples by these algorithms for both~$\tilde\Lambda$ cases. Polarized CBS and localized ALDI perform as expected when the scaling is guessed correctly, but generate clearly incorrect samples when it is not. This is because they depend on $\tilde\Lambda$ not only for their initial condition, but also for weighting the distance between particles throughout the algorithm. Localized CBS, on the other hand, proves itself able to recover from this poor initial guess and produces good---and almost identical---samples~for~both~$\tilde\Lambda$s.

    \begin{figure}
        \centering
        \includegraphics[width=.8\textwidth]{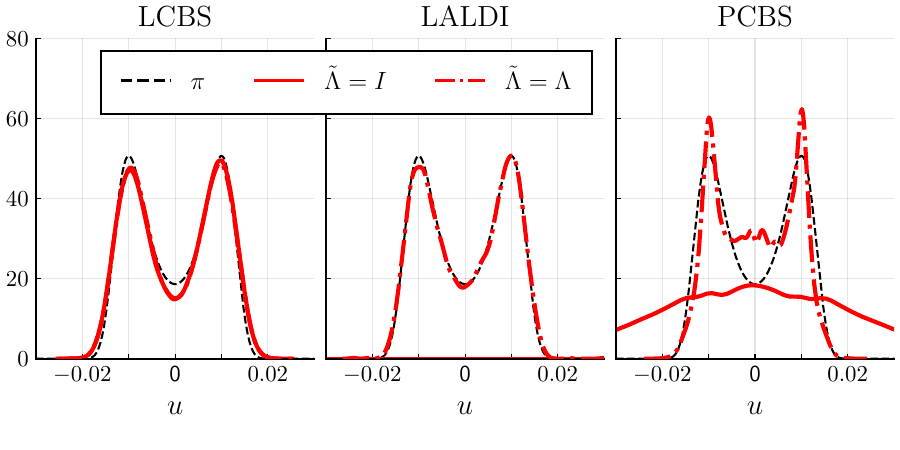}
        \caption{Marginalized samples for the poorly scaled distribution with potential~\cref{eq:num:multi:V-ai}. In the left figure, the two red lines are visually indistinguishable; in the middle figure, one line coincides with the horizontal axis.}
        \label{fig:num:multi:ai}
    \end{figure}

    \begin{figure}
        \centering
        \includegraphics[width=.8\textwidth]{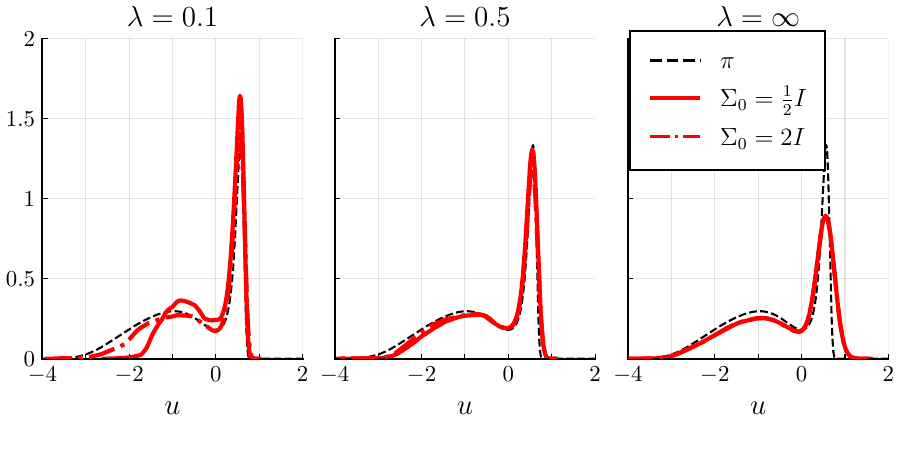}
        \caption{Localized CBS samples for~\cref{eq:num:multi:diffpeaks} with various preconditioners and initial distributions}
        \label{fig:num:multi:wcov}
    \end{figure}

    Now consider a different potential function with $d=1$:
    \begin{equation} \label{eq:num:multi:diffpeaks}
        V(u) = 2(u\exp(u))^4 - 4(u\exp(u))^2 - 2(u/3)^5 + 2.
    \end{equation}
    This again produces a multimodal distribution, now with one wide and one narrow peak. We first compare preconditioners for localized CBS with parameters ${(\beta, \kappa) = (10, 0.02)}$, ${N=1000}$ timesteps, and~${J=200}$ particles. We use preconditioner $\mathcal C_{\alpha, \alpha\lambda\mathcal C(\rho_t)}(\rho_t; U_t)$ for~${\lambda\in\{0.1, 0.5, \infty\}}$ and $\alpha\to0$; when $\lambda=\infty$, this equals the unweighted covariance $\mathcal C(\rho_t)$. The initial distribution has covariance ${\Sigma_0\in\{I/2, 2I\}}$. Resulting samples are shown in \cref{fig:num:multi:wcov}. When~$\lambda=\infty$, the global nature of the preconditioner means that the presence of the wide peak causes the narrow one to be estimated poorly. When~$\lambda=0.1$, on the other hand, localized CBS struggles to find the full distribution when the initial distribution is too narrow. A moderate $\lambda=0.5$ seems to combine the benefits of both extremes. Note that, as discussed in \cref{sec:lcbs:loc-prec},~$\lambda=\infty$ has the computational advantage that only one covariance needs to be computed~instead~of~$J$~\mbox{different}~ones.

    Next, we illustrate the importance of the correction term $\nabla_u\cdot\mathbf P_{U,t}$ in the localized CBS dynamics~\cref{eq:lcbs:loc-prec:lcbs} on the sampling problem~\cref{eq:num:multi:diffpeaks}. We consider the preconditioner $\mathcal C_{\alpha, \alpha\lambda\mathcal C(\rho_t)}(\rho_t; U_t)$ with $\lambda=0.5$ and $\alpha\to0$ and compare the normal localized CBS algorithm to a modified version where the correction term is omitted. We also apply polarized CBS, which does not have a similar correction term, with $\alpha=10$ and~$\lambda\in\{0.001, 0.002, 0.005\}$. We use $N=1000$ timesteps,~${J=200}$ particles, and an initial covariance $\Sigma_0 = 2I$ for all samplers. \Cref{fig:num:multi:corr} illustrates that only localized CBS with the correction term manages to accurately retrieve $\pi$. As before, polarized CBS only samples from a distribution approximating $\pi$.

    \begin{figure}
        \centering
        \includegraphics[width=.8\textwidth]{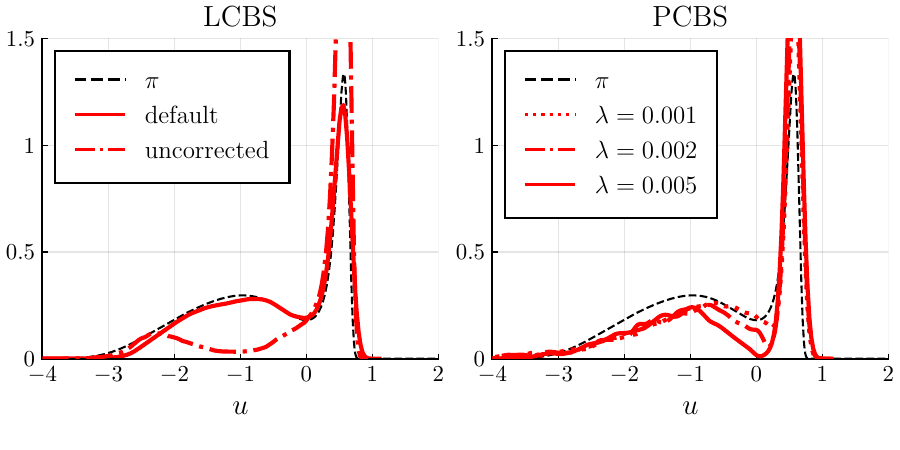}
        \caption{Localized and polarized CBS samples for~\cref{eq:num:multi:diffpeaks}}
        \label{fig:num:multi:corr}
    \end{figure}

    \subsection{A tent-shaped distribution} \label{sec:num:tent}
    In the next example we study convergence of localized CBS for the one-dimensional target distribution corresponding~to
    \begin{equation} \label{eq:num:tent:tent}
        \pi(u) = \widehat\pi(u) = \max(0, 1 - \abs u),
    \end{equation}
    a tent-shaped distribution with finite support. Our goal is not to compare to other methods, but rather to gain insight into the choice of $(\beta, \kappa)$. We will answer the question why, even though choosing~${\beta\rightarrow\infty}$ and $\kappa\rightarrow0$ seems desirable, it may still make sense to choose moderate values for both parameters in practice. \Cref{fig:num:tent:fig} shows the Wasserstein-2 distance between the exact distribution $\pi$ and the empirical distribution of localized CBS ensembles with a given number~$J$ of particles. This is compared for several $(\beta, \kappa)$ combinations. In this subsection, we find the empirical distribution for any $(\beta, \kappa, J)$ by simulating localized CBS $480$ times and, for each simulation, using the positions of $50$ randomly chosen particles at the~final~time~step~as~\mbox{samples}.

    \begin{figure}
        \centering
        \includegraphics[width=.65\textwidth]{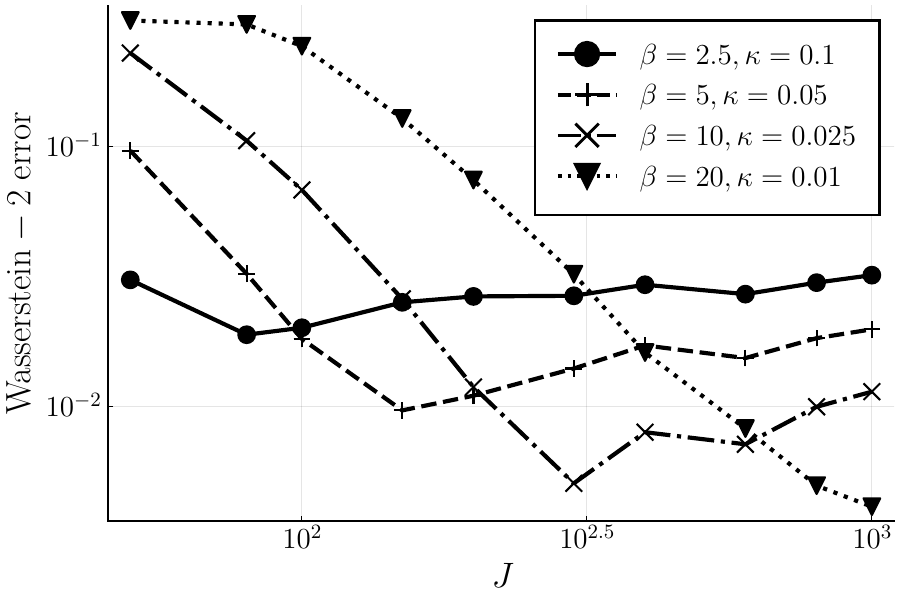}
        \caption{Wasserstein-$2$ error between the localized CBS samples and the target distribution~\cref{eq:num:tent:tent}}
        \label{fig:num:tent:fig}
    \end{figure}

    After a certain value, increasing $J$ no longer improves accuracy. This is caused by the discrepancy between the mean-field distribution and $\pi$, which decreases with large $\beta$ and small $\kappa$. However, these more accurate mean-field dynamics become increasingly difficult to approximate with a finite number of particles, as fewer particles get to contribute substantially to the weighted means. Hence, when large numbers of particles are computationally feasible, a high accuracy can be obtained by choosing~$\beta\gg0$ and $\kappa\approx0$. When relatively few particles can be used, on the other hand, the accuracy is actually highest for more moderate~$\beta$~and~$\kappa$~choices.
    \begin{remark}
        As $J$ keeps increasing, the errors in \cref{fig:num:tent:fig} become slightly larger again. This may be caused by a combination of two factors: (i) the Moreau envelope in \cref{sec:cbs-approx} approximates $\pi$ by a distribution with longer tails and (ii) when $J$ is small, localized CBS tends to undersample the tails. While the first factor is a source of error present for all $J$, the second factor may partially offset it for small $J$, resulting in an~error~smaller~than~that~for~${J\rightarrow\infty}$.
    \end{remark}

    \subsection{A Darcy flow inverse problem}
    Our final example is meant to assess how localized CBS scales to higher dimensions. We choose a more complex model than in earlier experiments, but the resulting posterior is near-Gaussian and does not require localization. We mainly use this example since it is slightly more realistic than the previous ones, and its dimension can be varied straightforwardly. A near-Gaussian distribution is a best-case scenario: scaling well here is likely a prerequisite for scaling well for non-Gaussian distributions.

    Consider the spatial domain~${\Omega\coloneqq[0, 1]^2}$. The Darcy flow inverse problem is to recover parameters $u$ of the permeability field $a(x, u)$ from noisy measurements of the pressure field $p(x)$ satisfying\hspace{-.1cm}
    \begin{subequations} \label{eq:num:darcy:darcy}
    \begin{align}
        -\nabla\cdot\bigl(a(x, u) \nabla p(x)\bigr) &= f(x), &x&\in\Omega,\\
        p(x) &= 0, &x&\in\partial\Omega.
    \end{align}
    \end{subequations}
    Here, $f(x) = c$ is a fluid source that we set to a constant value.

    The permeability $a(x, u)$ is modeled as a log-normal random field with mean zero and covariance operator $(-\Delta + \tau^2)^{-s}$ of $\log a$, where $-\Delta$ represents the Laplacian with homogeneous Neumann boundary conditions. This corresponds to a standard normal prior on the parameters $u_k$ in the Karhunen--Lo\`eve (KL) expansion
    \begin{equation} \label{eq:num:darcy:kl}
        \log a(x, u) = \sum\nolimits_{k\in\bN^2\setminus\{(0,0)\}} u_k\sqrt{\lambda_k}\,\phi_k(x),
    \end{equation}
    with eigenpairs $\lambda_k = (\pi^2\norm{k}_2^2 + \tau^2)^{-s}$ and $\phi_k = c_k\cos(\pi k_1x_1)\cos(\pi k_2x_2)$, where~${c_k=\sqrt2}$ if~$k_1k_2=0$ and $c_k=2$ otherwise (see, e.g.,~\cite{huangIteratedKalmanMethodology2022c}). We choose parameters~$(\tau, s)=(3, 2)$ and truncate the expansion~\cref{eq:num:darcy:kl} after the $d$ terms with largest eigenvalues $\lambda_k$. The coefficients~$u_k$ of these terms are the unknown parameters~$u$ with prior $\pi_\mathrm{prior} = \mathcal N(0, \Gamma_0)$, with $\Gamma_0 = I$, in the Bayesian inverse problem~\cref{eq:intro:bip:bip}. The data~$y$ consists of measurements of~$p$, taken at $49$ equispaced points and subject to noise known to be distributed according to $\pi_\mathrm{noise} = \mathcal N(0, \Gamma)$ with $\Gamma = 10^{-4}I$. The model $G$ solves~\cref{eq:num:darcy:darcy} with central finite differences on a grid with stepsize $h = 2^{-5}$. An observation is generated by sampling $\{u_k\}_k$ from the prior, solving~\cref{eq:num:darcy:darcy} with stepsize~$h=2^{-9}$, and perturbing that solution with a random sample from $\pi_\mathrm{noise}$. As described in \cref{sec:intro:bip}, this Bayesian inverse problem corresponds to~\cref{eq:intro:intro:dist} with
    \begin{equation}
        V(u) = \frac12\norm{y - G(u)}_{\Gamma}^2 + \frac12\norm u_{\Gamma_0}^2.
    \end{equation}

    \begin{figure}
        \centering
        \includegraphics[width=.8\textwidth]{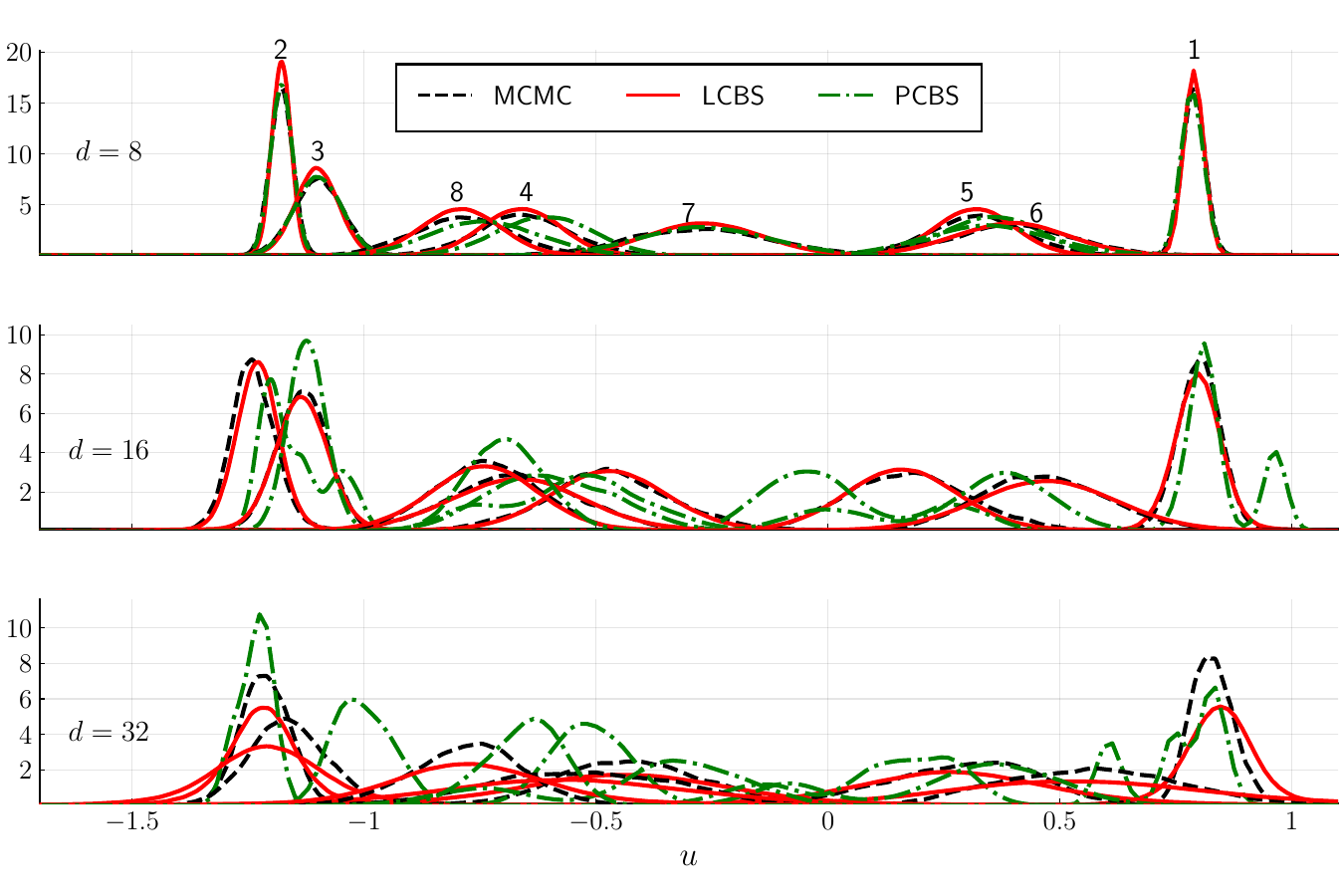}
        \caption{MCMC, localized CBS, and polarized CBS samples of the first $8$ KL terms in the $d$-dimensional Darcy flow problem, with $d\in\{8, 16, 32\}$. The $d=8$ plot labels the modes in order of decreasing eigenvalue $\lambda_k$.}
        \label{fig:num:darcy:fig}
    \end{figure}

    Since the resulting posterior is unknown, we need to compute a sensible reference solution. For this, we use random-walk MCMC: we run 32 chains of $10^5$ steps each, of which the first quarter is discarded as burn-in. For localized CBS, we tuned parameters to obtain good results for $d=8$: we use $N=400$ timesteps of length $\dt=0.03$,~${J=1000}$ particles, a large initial covariance $\Sigma_0 = 9I$ (similarly to~\cite{carrilloConsensusbasedSampling2022}), parameters $(\beta, \kappa) = (3, 0.2)$, and our default preconditioner $\mathcal C(\rho_t)$.

    We then test whether these parameters can be scaled up to higher dimensions. \Cref{fig:num:darcy:fig} compares the reference samples to the localized CBS samples. It shows the marginalized distributions for the first 8 modes, for the number of dimensions $d\in\{8,16,32\}$. The modes are successfully located for each $d$. When $d\in\{8,16\}$, the uncertainty around the mode is captured well; when $d=32$, it is slightly overestimated. This matches the argument in \cref{rem:comput:scale} that scaling localized CBS to high dimensions may prove difficult, although further research is needed.

    For comparison, we conduct the same experiment with polarized CBS\footnote{Localized ALDI diverges with this initial distribution.}, using $J=1000$ particles over $N=400$ timesteps of length $\dt=0.02$. We chose the parameters $D=I$ and $\lambda=0.25$ to work well for $d=8$. The results, shown in \cref{fig:num:darcy:fig} as well, display a similar trend as for localized CBS (but more pronounced): in higher dimensions, these parameters no longer succeed in capturing the target distribution. Increasing the number of particles with the dimension may help, but becomes intractable quickly as $d$ increases. As for localized CBS, further research is needed to determine whether polarized CBS can be scaled up efficiently to higher dimensions.

\section{Conclusions} \label{sec:concl}
    We have derived localized consensus-based sampling (localized CBS) as an alternative formulation of consensus-based sampling (CBS) obtained from ensemble-preconditioned Langevin dynamics through a sequence of approximations involving the Moreau envelope and a proximal operator. By retaining only the steps that remain valid beyond the Gaussian setting, we obtained a variant closely related to polarized CBS but endowed with full affine invariance. In the mean-field limit, localized CBS is exact for Gaussian targets.

    Numerical experiments demonstrate that localized CBS achieves comparable or improved performance relative to polarized CBS and localized ALDI, particularly in non-Gaussian and moderately multimodal problems. The method is fully gradient-free and easily parallelizable.

    Several open research directions remain. First, the theoretical understanding of localized CBS---including its non-Gaussian convergence, well-posedness, and mean-field limit---is still incomplete. As a first step, the ODEs~\cref{eq:lmm:lcbs:gauss:moments} may allow deriving an explicit convergence rate and comparing different $(\alpha, \lambda)$ choices in the Gaussian case. Second, as discussed in \cref{rem:comput:scale}, the efficiency of localized CBS in high-dimensional settings has not been analyzed and likely depends on problem structure and localization scale. Third, localized CBS may be suitable for sampling from non-differentiable potentials, similarly to other prox-based samplers~\cite{leeStructuredLogconcaveSampling2021,pereyraProximalMarkovChain2016,titsiasAuxiliaryGradientBasedSampling2018}. Finally, adapting localized CBS to exploit additional problem information---for example, hybrid strategies that use gradient evaluations only for selected particles---may help extend its applicability to more complex sampling tasks.

\section*{Acknowledgments}
    We are grateful to Kathrin Hellmuth for her comments on an earlier draft of this paper. We also wish to thank the anonymous referees for providing valuable feedback and suggestions, which greatly improved this paper. Our work was partially funded by the Research Foundation~--~Flanders (FWO) (grants 1169725N, G081222N, G033822N, and G0A0920N) and the KU Leuven Research Council (C1 projects C14/23/098 and C14/24/103).

\appendix

\section{Various technical proofs} \label{sec:apdx}
    \begin{proof}[Proof of~\cref{eq:lmm:lcbs:gauss:moments:C,eq:lmm:lcbs:gauss:moments:mu-w}]
        Consider means $(m, m_t)$ and covariances $(\Sigma, \Sigma_t)$ such that~${\pi\sim\mathcal N(m, \Sigma)}$ and $\rho_t\sim\mathcal N(m_t, \Sigma_t)$. We prove~\cref{eq:lmm:lcbs:gauss:moments:mu-w} similarly to~\cite{bungertPolarizedConsensusbasedDynamics2024}, by using the formula found in, e.g.,~\cite[\S8.1.8]{petersenMatrixCookbook2006} to compute $\mu_{\beta,\kappa\mathbf P_{*,t}}(\rho_t; U_t)$ as
        \begin{align*}
            &\frac{\int v\exp(-\frac\beta{2\kappa}\norm{v - U_t}_{\mathbf P_{*,t}}^2)\exp(-\frac\beta2\norm{v - m}_\Sigma^2)\exp(-\frac12\norm{v - m_t}_{\Sigma_t}^2)\,\mathrm dv}{\int\exp(-\frac\beta{2\kappa}\norm{v - U_t}_{\mathbf P_{*,t}}^2)\exp(-\frac\beta2\norm{v - m}_\Sigma^2)\exp(-\frac12\norm{v - m_t}_{\Sigma_t}^2)\,\mathrm dv}\\
            &\quad= \Bigl(\beta\Sigma^{-1} + \Sigma_t^{-1} + \frac\beta\kappa\mathbf P_{*,t}^{-1}\Bigr)^{-1}\Bigl(\beta\Sigma^{-1}m + \Sigma_t^{-1}m_t + \frac\beta\kappa\mathbf P_{*,t}^{-1}U_t\Bigr).
        \end{align*}
    \Cref{eq:lmm:lcbs:gauss:moments:C:K,eq:lmm:lcbs:gauss:moments:C:C} follow directly from the definitions. \Cref{eq:lmm:lcbs:gauss:moments:C:Calpha,eq:lmm:lcbs:gauss:moments:C:Cw} are derived similarly to $\mu_{\beta,\kappa\mathbf P_{*,t}}(\rho_t; U_t)$ above.
    \end{proof}

    \begin{proof}[Proof of \cref{lmm:comput:corr}]
        In~\cite[Lemma 3.11]{reichFokkerPlanckParticleSystems2021a}, it is shown that with general weights,
        \begin{equation}
            \nabla_{u^i}\cdot\widehat{\mathbf P} = \omega^{ii}\,(d+1)(U^i - \bar U^{(i)}) + \sum\nolimits_{j=1}^J\left(U_c^{(i),j}(U_c^{(i),j})^T - \bar U^{(i)}(\bar U^{(i)})^T\right)\nabla_{u^i}\omega^{ij}.\hspace{-.1cm}
        \end{equation}
        With our weight function, we find that
        \begin{align*}
            &\nabla_{u^i}\omega^{ij} = \nabla_{u^i}\Bigl(w^{ij}\Big/\sum\nolimits_{k=1}^Jw^{ik}\Bigr) = \Bigl(\nabla_{u^i} w^{ij} - \omega^{ij}\nabla_{u^i} \Bigl(\sum\nolimits_{k=1}^Jw^{ik}\Bigr)\Bigr) \Big/ \sum\nolimits_{k=1}^Jw^{ik}\\
            &\quad= \Bigl[-\frac1{2\lambda}\left(2\widehat{\mathcal C}^{-1}(U^i - U^j) - X^{ij}\right) + \delta^{ij}\alpha\frac{\nabla_{u^i}\widehat\pi(U^i)}{\widehat\pi(U^i)}\Bigr]\omega^{ij}\\
            &\quad\qquad+ \omega^{ij}\sum\nolimits_{k=1}^J\Bigl[\frac1{2\lambda}\left(2\widehat{\mathcal C}^{-1}(U^i - U^k) - X^{ik}\right) - \delta^{ik}\alpha\frac{\nabla_{u^i}\widehat\pi(U^i)}{\widehat\pi(U^i)}\Bigr]\omega^{ik}\\
            &\quad= \frac{\omega^{ij}}\lambda\Bigl(\bigl[\widehat{\mathcal C}^{-1}(U^j - U^i) + \frac{X^{ij}}2\bigr] + \bigl[\widehat{\mathcal C}^{-1}U_c^{(i),i} - \sum\nolimits_{k=1}^J \omega^{ik}\frac{X^{ik}}2 \bigr]\Bigr) + (\delta^{ij}-\omega^{ij})\alpha\omega^{ii}\frac{\nabla_{u^i}\widehat\pi(U^i)}{\widehat\pi(U^i)}\\
            &\quad= \frac{\omega^{ij}}\lambda\Bigl(\widehat{\mathcal C}^{-1}U_c^{(i),j} - \sum\nolimits_{k=1}^J \omega^{ik}\frac{X^{ik}}2 + \frac{X^{ij}}2\Bigr) + (\delta^{ij}-\omega^{ij})\alpha\omega^{ii}\frac{\nabla_{u^i}\widehat\pi(U^i)}{\widehat\pi(U^i)},
        \end{align*}
        with $(X^{ik})_\ell=(U^i - U^k)^T\widehat{\mathcal C}^{-1}\,(\partial_{u^i_\ell} \widehat{\mathcal C})\,\widehat{\mathcal C}^{-1}(U^i - U^k)$. We have $\partial_{u^i_\ell} \widehat{\mathcal C} = J^{-1}\bigl(U_c^ie_\ell^T+e_\ell(U_c^i)^T\bigr)$ and%
        \begin{subequations}
        \begin{align}
            X^{ij}/2 &= J^{-1}\bigl((U_c^i)^T\widehat{\mathcal C}^{-1}(U^i - U^j)\bigr)\widehat{\mathcal C}^{-1}(U^i - U^j),\\
            \sum\nolimits_{k=1}^JX^{ik}\omega^{ik}/2 &= J^{-1}\widehat{\mathcal C}^{-1}\bigl(U_c^{(i),i}(U_c^{(i),i})^T + \widehat{\mathbf P}\bigr)\widehat{\mathcal C}^{-1}U_c^i.
        \end{align}
        \end{subequations}
        We split $\nabla_{u^i}\cdot\widehat{\mathbf P} = A + (B_1 + B_2 + B_3 - C_1 - C_2 - C_3)/\lambda + D$ with
        \begin{align*}
            A &\coloneqq \omega^{ii}(d+1)(U^i-\bar U^{(i)}),\\
            B_1 &\coloneqq \sum\nolimits_{j=1}^JU_c^{(i),j}(U_c^{(i),j})^T\omega^{ij}\bigl[\widehat{\mathcal C}^{-1}U_c^{(i),j}\bigr] = \bm U_c^{(i)}\bigl(\omega^i \odot \mathrm{diag}((\bm U_c^{(i)})^T \widehat{\mathcal C}^{-1}\bm U_c^{(i)})\bigr),\\
            B_2 &\coloneqq \sum\nolimits_{j=1}^JU_c^{(i),j}(U_c^{(i),j})^T\omega^{ij}\bigl[-J^{-1}\widehat{\mathcal C}^{-1}\bigl(U_c^{(i),i}(U_c^{(i),i})^T + \widehat{\mathbf P}\bigr)\widehat{\mathcal C}^{-1}U_c^i\bigr]\\
            &= -J^{-1}\widehat{\mathbf P}\widehat{\mathcal C}^{-1}(U_c^{(i),i}(U_c^{(i),i})^T + \widehat{\mathbf P})\widehat{\mathcal C}^{-1}U_c^i,\\
            B_3 &\coloneqq \sum\nolimits_{j=1}^JU_c^{(i),j}(U_c^{(i),j})^T\omega^{ij}\bigl[J^{-1}\bigl((U_c^i)^T\widehat{\mathcal C}^{-1}(U^i - U^j)\bigr)\widehat{\mathcal C}^{-1}(U^i - U^j)\bigr]\\
            &= J^{-1}\bm U_c^{(i)} (\omega^i \odot \mathrm{diag}((\bm U_c^{(i)})^T\widehat{\mathcal C}^{-1}(\bm U - U^i\bm 1^T)) \odot ((U_c^i)^T \widehat{\mathcal C}^{-1}(\bm U - U^i\bm 1^T))^T),\\
            C_1 &\coloneqq \sum\nolimits_{j=1}^J \bar U^{(i)}(\bar U^{(i)})^T\omega^{ij}\bigl[\widehat{\mathcal C}^{-1}U_c^{(i),j}\bigr]= \bar U^{(i)}(\bar U^{(i)})^T\widehat{\mathcal C}^{-1}\Bigl(\sum\nolimits_{j=1}^J\omega^{ij}U_c^{(i),j}\Bigr) = 0,\\
            C_2 &\coloneqq \sum\nolimits_{j=1}^J \bar U^{(i)}(\bar U^{(i)})^T\omega^{ij}\Bigl[- \sum\nolimits_{k=1}^J \omega^{ik}\frac{X^{ik}}2\Bigr] = -\bar U^{(i)}(\bar U^{(i)})^T\sum\nolimits_{k=1}^J \omega^{ik}\frac{X^{ik}}2,\\
            C_3 &\coloneqq  \sum\nolimits_{j=1}^J \bar U^{(i)}(\bar U^{(i)})^T\omega^{ij}\Bigl[\frac{X^{ij}}2\Bigr] = \bar U^{(i)}(\bar U^{(i)})^T\sum\nolimits_{j=1}^J \omega^{ij}\frac{X^{ij}}2 = -C_2,\\
            D &\coloneqq \sum\nolimits_{j=1}^J \left(U_c^{(i),j}(U_c^{(i),j})^T - \bar U^{(i)}(\bar U^{(i)})^T\right)(\delta^{ij}-\omega^{ij})\alpha\omega^{ii}\frac{\nabla_{u^i}\widehat\pi(U^i)}{\widehat\pi(U^i)}\\
            &= \alpha\omega^{ii}\left(U_c^{(i),i}(U_c^{(i),i})^T - \widehat{\mathbf P}\right)\frac{\nabla_{u^i}\widehat\pi(U^i)}{\widehat\pi(U^i)}. \qedhere
        \end{align*}
    \end{proof}

\bibliography{references}

\end{document}